\documentclass[12pt]{amsart}
\usepackage[nobysame,abbrev,alphabetic]{amsrefs}
\usepackage{amssymb}
\usepackage[all]{xy}
\usepackage{stmaryrd}

\DeclareMathOperator{\Img}{Im}
\DeclareMathOperator{\Inf}{Inf}
\DeclareMathOperator{\Ker}{Ker}
\DeclareMathOperator{\Res}{Res}
\DeclareMathOperator{\Span}{Span}
\DeclareMathOperator{\Supp}{Supp}
\DeclareMathOperator{\trg}{trg}
\newcommand{\isom}{\cong}

\addtolength{\textwidth}{20pt}
\addtolength{\evensidemargin}{-10pt}
\addtolength{\oddsidemargin}{-10pt}
\addtolength{\textheight}{15pt}


\raggedbottom

\newtheorem{thm}{Theorem}[section]
\newtheorem{cor}[thm]{Corollary}
\newtheorem{lem}[thm]{Lemma}
\newtheorem{prop}[thm]{Proposition}
\newtheorem{defin}[thm]{Definition}
\newtheorem{exam}[thm]{Example}

\newtheorem{rem}[thm]{Remark}
\newtheorem*{mainthm}{Main Theorem}
\swapnumbers

\numberwithin{equation}{section}

\begin{document}

\title[Cofinality of Galois Cohomology]{Cofinality of Galois Cohomology within Purely Quadratic Graded Algebras}

\author{Tamar Bar-On}
\address{Department of Mathematics\\
Ben-Gurion University of the Negev\\
P.O.\ Box 653, Be'er-Sheva 84105\\
Israel}
\curraddr{Jerusalem College of Technology\\
Hava'ad Haleumi 21, Jerusalem 91160\\
Israel} 
\email{tamarnachshoni@gmail.com}

\author{Ido Efrat}
\address{Earl Katz Family Chair in Pure Mathematics\\
Department of Mathematics\\
Ben-Gurion University of the Negev\\
P.O.\ Box 653, Be'er-Sheva 84105\\
Israel} \email{efrat@bgu.ac.il}

\thanks{This work was supported by the Israel Science Foundation (grant No.\ 569/21).
The first author was supported by the BGU Center for Advanced Studies in Mathematics}

\keywords{Galois cohomology, Norm Residue Theorem, Absolute Galois groups, Bilinear maps, Generic splitting fields, Right-angled Artin groups}

\subjclass[2020]{Primary 12G05, Secondary 19F15}

\maketitle
\vspace*{-10pt}

\begin{abstract}
Let $p$ be a prime number.
For a field $F$ containing a root of unity of order $p$, let $H^\bullet(F)=H^\bullet(F,\mathbb{F}_p)$ be the mod-$p$ Galois cohomology graded $\mathbb{F}_p$-algebra of $F$.
By the Norm Residue Theorem, $H^\bullet(F)$ is a purely quadratic graded-commutative algebra, and is therefore determined by the cup product $\cup\colon H^1(F)\times H^1(F)\to H^2(F)$.
We prove that the class of all Galois cohomology algebras $H^\bullet(F)$ is cofinal in the class of all purely quadratic graded-commutative $\mathbb{F}_p$-algebras $A_\bullet$, in the following sense:
For every $A_\bullet$ there exists $F$ such that the bilinear map $A_1\times A_1\to A_2$, which determines $A_\bullet$, embeds in the cup product bilinear map $\cup\colon H^1(F)\times H^1(F)\to H^2(F)$.

We further provide examples of $\mathbb{F}_p$-bilinear maps which are not realizable by fields $F$ in this way.
These are related to recent results by Snopce--Zalesskii and Blumer-Quadrelli--Weigel on the Galois theory of pro-$p$ right-angled Artin groups, as well as to a conjecture by Marshall on the possible axiomatization of quadratic form theory of fields.  
\end{abstract}

\tableofcontents

\section{Introduction}
We fix a prime number $p$.
Let $F$ be a field containing a root of unity $\zeta$ of order $p$ and let $G_F={\rm Gal}(F^{\rm sep}/F)$ be its absolute Galois group.
We write $H^i(F)$ for the $i$th profinite cohomology group $H^i(G_F,\mathbb{F}_p)$, where $G_F$ acts trivially on $\mathbb{F}_p$, and consider the cohomology $\mathbb{F}_p$-algebra $H^\bullet(F)=\bigoplus_{i\geq0}H^i(F)$ with the cup product.
A fundamental property of this graded-commutative algebra is given by the celebrated \textsl{Norm Residue Theorem} (formerly the Bloch--Kato conjecture; See \S\ref{section on Galois cohomology}), proved by Voevodsky and Rost with a contribution by Weibel:
$H^\bullet(F)$ is a \textsl{quadratic} graded algebra, which loosely speaking means that it is generated by its degree $1$ elements, with relations originating from the degree 2 component.
Moreover, it is \textsl{purely quadratic}, meaning that the defining relations can be taken to be pure tensors in $H^1(F)^{\otimes2}$  (see \S\ref{section on kappa-algebras} for the precise definition).
A major problem in Galois cohomology is to characterize the graded $\mathbb{F}_p$-algebras that arise as Galois cohomology algebras $H^\bullet(F)$ among all purely-quadratic graded-commutative $\mathbb{F}_p$-algebras.

The graded algebra $H^\bullet(F)$ has the additional property that there exists a distinguished element $\epsilon\in H^1(F)$ such that $2\epsilon=0$ and $\chi\cup(\epsilon+\chi)=0$ for every $\chi\in H^1(F)$.
Specifically, $\epsilon=(-1)_F$ is the Kummer element of $-1$, and this identity is the cohomological interpretation of the splitting of symbol $F$-algebras $(a,-a)_{F,\zeta}$ (see \S\ref{section on Galois cohomology}). 
Therefore, we restrict ourselves to purely quadratic graded-commutative $\mathbb{F}_p$-algebras $A_\bullet=\bigoplus_{i\geq0}A_i$ with a distinguished element $\epsilon\in A_1$ satisfying these two conditions.
Thus $(A_\bullet,\epsilon)$ is a \textsl{$\kappa$-algebra} in the sense of Bass--Tate \cite{BassTate73}.
Note that in the case where $p$ is odd we have $\epsilon=0$, so the structures $A_\bullet$ are just the purely quadratic graded-commutative $\mathbb{F}_p$-algebras.

Now a purely quadratic $\kappa$-algebra $(A_\bullet,\epsilon)$ is determined by the product map $A_1\times A_1\to A_2$ and $\epsilon$ (see \S\ref{section on kappa-algebras}).
Therefore, we consider the category of \textsl{augmented $\mathbb{F}_p$-bilinear maps} $(b\colon V\times V\to W,\epsilon)$, where $V$ and $W$ are $\mathbb{F}_p$-linear spaces, and $\epsilon\in V$ is a distinguished element with the above properties (see \S\ref{section on augmented bilinear maps}).
Thus our question reduces to identifying within this category the augmented bilinear maps \textsl{of field type}, namely, those realizable as augmented bilinear maps $(\cup\colon H^1(F)\times H^1(F)\to H^2(F),(-1)_F)$
for some field $F$ containing a root of unity of order $p$.
We prove:

\begin{mainthm}
\label{main theorem}
The augmented $\mathbb{F}_p$-bilinear maps of field type are cofinal in the category of all augmented $\mathbb{F}_p$-bilinear maps.
That is, for every augmented $\mathbb{F}_p$-bilinear map $(b\colon V\times V\to W,\epsilon)$ there exists a field $F$ containing a root of unity of order $p$ and a monomorphism (in the natural sense)
\[
\bigl(b\colon V\times V\to W,\epsilon\bigr)\to \bigl(\cup\colon H^1(F)\times H^1(F)\to H^2(F),(-1)_F\bigr),
\]
where moreover, $W$ is mapped isomorphically onto $H^2(F)$.
\end{mainthm}   
See Theorem \ref{precise main theorem} for a precise statement and proof.

In Corollary \ref{main result for skew-symmetric bilinear maps} we prove an analogous result in the non-augmented context:
The cup product maps $\cup\colon H^1(F)\times H^1(F)\to H^2(F)$, for fields $F$ as above, are cofinal in the category of all skew-symmetric $\mathbb{F}_p$-bilinear maps $b\colon V\times V\to W$.

By the Norm Residue Theorem in degree 2 (which is due to Merkurjev and Suslin), augmented bilinear maps of field type are \textsl{surjective}, in the sense that the image of $b$ spans $W=H^2(F)$.
A natural question is whether in the Main Theorem one can take the monomorphism to be an isomorphism,
when the given augmented bilinear map is assumed to be surjective.
In \S\ref{section on the common slot property} we answer this question negatively, as follows:
For $p=2$, augmented $\mathbb{F}_2$-bilinear maps $(H^1(F),H^2(F),\cup,(-1)_F)$ of field type have an additional property -- the \textsl{`common slot property'} of the quaternionic pairing.
We provide examples of surjective augmented $\mathbb{F}_2$-bilinear maps which do not satisfy this property, and therefore are not of field type. 
In fact, it is a long-standing open problem in the algebraic theory of quadratic forms whether the above properties of $\epsilon$ together with the common slot property already completely characterize the $\mathbb{F}_2$-bilinear maps which are realizable as cup product maps over fields of characteristic $\neq2$ (see Remark \ref{quaternionic maps} as well as \cite{Marshall04}*{Open Problem 1 and Th.\ 2.2}, \cite{Lam05}*{Ch.\ XII, \S8}).

Our examples are constructed using \textsl{pro-$p$ right-angled Artin groups} $G_\Gamma$ associated with finite simplicial (unoriented) graphs $\Gamma$.
This means  $G_\Gamma$ is the pro-$p$ group with the vertices of $\Gamma$ as generators and with commutators of vertices connected by edges as defining relations.
The Galois-theoretic properties of the groups $G_\Gamma$ have been the object of extensive study in recent years.
In particular, Snopce and Zalesskii \cite{SnopceZalesskii22}, building on earlier results by Quadrelli \cite{Quadrelli14} and Cassella--Quadrelli \cite{CassellaQuadrelli21}, proved the following characterization theorem:
$G_\Gamma$ is realizable as the maximal pro-$p$ Galois group $G_F(p)$ of a field $F$ containing a root of unity of order $p$ if and only if $\Gamma$ does not contain a 4-vertex line $L_3$ or a 4-vertex circle $C_4$ as induced subgraphs.
See \cite{BlumerQuadrelliWeigel24} for a generalization to oriented graphs.

Now as we show in Theorem \ref{equivalence}, when $p=2$, the equivalent conditions in the Snopce--Zalesskii theorem are also equivalent to the realization of the augmented $\mathbb{F}_p$-bilinear map $(\cup\colon  H^1(G_\Gamma)\times H^1(G_\Gamma)\to H^2(G_\Gamma),0)$ by a field, as well as to the common slot property for this map. 
Our counterexamples are just the augmented $\mathbb{F}_2$-bilinear maps associated in this way with graphs $\Gamma$ which do not contain $L_3$ or $C_4$ as induced graphs. 

In \cite{SnopceZalesskii22} and \cite{Quadrelli14}, the groups $G_{L_3},G_{C_4}$ are ruled out from being maximal pro-$p$ Galois groups by showing that they are not \textsl{Bloch--Kato pro-$p$ groups}, that is, they have closed subgroups for which the mod-$p$ graded cohomology ring is not quadratic.
Our approach using the common slot property (for $p=2$) may therefore suggest a possible connection between the common slot property and the Bloch--Kato property.

We remark that there are additional \textsl{conjectural} properties of the graded algebras $H^\bullet(F)$ for arbitrary $p$, notably they are conjectured to be Koszul. 
See \cite{Positselski14}, \cite{MinacPasiniQuadrelliTan21}, \cite{MinacPasiniQuadrelliTan22}.

Our proof of the Main Theorem is based on Amitsur's construction of generic splitting fields of central simple algebras, or alternatively, elements of the second cohomology group $H^2(F)$ (See \S\ref{section on generic splitting fields}).
This construction is applied iteratively to produce fields whose cohomology groups realize the desired bilinear maps.
Transcendental constructions in this spirit were used by Min\'a\v c and Ware (\cite{MinacWare91}, \cite{MinacWare92}), as well as by Nikolov and the first-named author \cite{BarOnNikolov24}, in their works on the realization of pro-$p$ Demu\v skin groups of infinite rank as maximal pro-$p$ Galois groups of fields.

\medskip

We thank Uzi Vishne for a helpful discussion.
We also thank the referee for a careful reading of this manuscript and for their comments.

\section{Augmented bilinear maps}
\label{section on augmented bilinear maps}
Let $R$ be a commutative ring.
An \textsl{augmented $R$-bilinear map} $(V,W,b,\epsilon)$ consists of $R$-modules $V$ and $W$, an $R$-bilinear map 
\[
b\colon V\times V\to W,
\]
and a distinguished element $\epsilon\in V$, such that:
\begin{enumerate}
\item[(i)]
$b(v,\epsilon+v)=0$ for every $v\in V$;
\item[(ii)]
$2\epsilon=0$.
\end{enumerate}
We say that $(V,W,b,\epsilon)$ is \textsl{surjective} if the homomorphism $V\otimes_R V\to W$ induced by $b$ is surjective.

A \textsl{morphism} $f=(f_1,f_2)\colon(V,W,b,\epsilon)\to(V',W',b',\epsilon')$ of augmented $R$-bilinear maps consists of $R$-module homomorphisms $f_1\colon V\to V'$ and $f_2\colon W\to W'$ such that $f_1(\epsilon)=\epsilon'$, and the following diagram commutes:
\[
\xymatrix{
V\ar[d]_{f_1}&*-<3pc>{\times} &V\ar[d]^{f_1}\ar[r]^{b}&W\ar[d]^{f_2}\\
V'&*-<3pc>{\times}&V'\ar[r]^{b'}&W'.
}
\]
We call $f$ a \textsl{monomorphism} if both $f_1$ and $f_2$ are injective.
The augmented $R$-bilinear maps thus form a category, denoted {\bf Aug-}$R${\bf-Bilin}.

\begin{lem}
\label{basic properties of augmented cup products}
Let $(V,W,b,\epsilon)$ be an augmented $R$-bilinear map.
\begin{enumerate}
\item[(a)]
For every $v\in V$ one has $b(v,\epsilon)=b(v,v)=b(\epsilon,v)$. 
\item[(b)]
The map $b$ is \textsl{skew-symmetric}, that is, $b(v,v')=-b(v',v)$ for every $v,v'\in V$.
\item[(c)]
If $2$ is invertible in $R$, then $\epsilon=0$.
\end{enumerate}
\end{lem}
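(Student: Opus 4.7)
The plan is to extract from the two axioms the identities listed in (a)--(c) by simple bilinear-expansion calculations, with skew-symmetry as the main lever. The single nontrivial move is to unwind axiom (i) applied to a sum $v+v'$ and match terms against the instances of (i) applied to $v$ and to $v'$ separately.

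First I would extract the ``half-identities'' one obtains by expanding axiom (i) directly. For any $v\in V$, bilinearity in the second slot gives
\[
0=b(v,\epsilon+v)=b(v,\epsilon)+b(v,v),
\]
so $b(v,\epsilon)=-b(v,v)$. Next, replacing $v$ by $\epsilon+v$ in (i) and using (ii) one finds
\[
0=b(\epsilon+v,\epsilon+(\epsilon+v))=b(\epsilon+v,2\epsilon+v)=b(\epsilon+v,v)=b(\epsilon,v)+b(v,v),
\]
so $b(\epsilon,v)=-b(v,v)$. These are the preliminary forms of (a); to upgrade them to equalities with $b(v,v)$ I still need $2b(v,v)=0$, which I will get from (b).

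For (b), I expand $b(v+v',\epsilon+(v+v'))=0$ using bilinearity in both slots:
\[
b(v,\epsilon)+b(v',\epsilon)+b(v,v)+b(v,v')+b(v',v)+b(v',v')=0.
\]
Substituting the two half-identities above for $b(v,\epsilon)$ and $b(v',\epsilon)$ makes the ``diagonal'' terms $b(v,v)$ and $b(v',v')$ cancel, leaving $b(v,v')+b(v',v)=0$, which is skew-symmetry.

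With (b) in hand, setting $v'=v$ yields $2b(v,v)=0$, so the half-identities from the first step become $b(v,\epsilon)=-b(v,v)=b(v,v)$ and $b(\epsilon,v)=b(v,v)$, completing (a). Finally, (c) is immediate from (ii): if $2\in R^\times$ then multiplying $2\epsilon=0$ by $2^{-1}$ forces $\epsilon=0$. I do not anticipate a real obstacle; the only subtlety is the logical ordering, since the strong form of (a) actually depends on the skew-symmetry proved in (b), so I would present the weak half-identities, then (b), then close out (a), and finally (c).
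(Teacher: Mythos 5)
Your proof is correct and follows essentially the same route as the paper's: both extract the identities by expanding axiom (i) at $v$, at $\epsilon+v$, and at $v+v'$, and (c) is read off from (ii). The only (harmless) difference is ordering --- the paper obtains $2b(v,v)=0$ directly from (ii) via $2b(v,\epsilon)=b(v,2\epsilon)=0$ and so completes (a) before (b), whereas you defer this to the skew-symmetry of (b); either arrangement is fine.
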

\begin{proof}
(a) \quad
The first equality follows from (i), (ii) and the bilinearity of $b$.
For the second equality, apply (i) with $v$ replaced by $\epsilon+v$.

\medskip

(b) \quad
For every $v,v'\in V$ we obtain from (i) and the bilinearity that
\[
\begin{split}
0=b(v+v',\epsilon+v+v')&=b(v,\epsilon+v)+b(v,v')+b(v',v)+b(v',\epsilon+v')\\
&=b(v,v')+b(v',v).
\end{split}
\]

\medskip
(c) is immediate from (ii).
\end{proof}

Subsequently, we will focus on the case where $R=\mathbb{F}_p$ for a prime $p$.
In this case we have the following partial converse of Lemma \ref{basic properties of augmented cup products}(b):

\begin{lem}
\label{bilinear extends to augmented bilinear}
Let $p$ be a prime number and let $b\colon V\times V\to W$ be a skew-symmetric $\mathbb{F}_p$-bilinear map.
Then there exists an augmented $\mathbb{F}_p$-bilinear map $(\widehat V,W,\widehat b,\epsilon)$ such that $V$ is an $\mathbb{F}_p$-linear subspace of $\widehat V$ of codimension at most $1$ and $\widehat b$ extends $b$.
\end{lem}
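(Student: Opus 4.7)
The plan is to split into the cases $p$ odd and $p=2$. If $p$ is odd, I claim no extension is needed: by skew-symmetry $b(v,v)=-b(v,v)$, so $2b(v,v)=0$, and since $2$ is invertible in $\mathbb{F}_p$ this gives $b(v,v)=0$. Then $(V,W,b,0)$ itself satisfies the augmentation axioms, so I can take $\widehat{V}=V$ and $\epsilon=0$.

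The substantive case is $p=2$, where skew-symmetric means symmetric. The key object is the map $q\colon V\to W$ defined by $q(v)=b(v,v)$. I would first verify that $q$ is $\mathbb{F}_2$-linear: additivity follows from
\[
q(v+v')=b(v,v)+b(v,v')+b(v',v)+b(v',v')=q(v)+q(v')+2b(v,v')=q(v)+q(v'),
\]
and $\mathbb{F}_2$-homogeneity is automatic because $a^2=a$ for $a\in\mathbb{F}_2$. The reason the lemma requires a codimension-$1$ extension is precisely that condition (i) together with Lemma \ref{basic properties of augmented cup products}(a) forces $b(v,\epsilon)=b(v,v)=q(v)$, so $\epsilon$ must represent the linear functional-valued map $q$ via $b$, which in general need not be representable inside $V$.

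To build $\widehat{V}$, I adjoin a formal vector: set $\widehat{V}=V\oplus\mathbb{F}_2\epsilon$ and define
\[
\widehat{b}(v+a\epsilon,\,v'+a'\epsilon)=b(v,v')+a\,q(v')+a'\,q(v).
\]
Bilinearity of $\widehat{b}$ reduces to the linearity of $q$ established above, and $\widehat{b}$ plainly extends $b$. Axiom (ii) is automatic in characteristic $2$, and axiom (i) is a direct computation: for $w=v+a\epsilon$,
\[
\widehat{b}(w,\epsilon+w)=b(v,v)+(a+1)q(v)+a\,q(v)=q(v)+(2a+1)q(v)=2q(v)=0.
\]
This gives the desired augmented bilinear map, with $V\subset\widehat{V}$ of codimension exactly $1$ (hence at most $1$). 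The only step that requires any thought is the linearity of $q$ in characteristic $2$; once that is in hand, the extension and verification are mechanical.
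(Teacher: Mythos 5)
Your proof is correct and follows essentially the same route as the paper: the defining formula $\widehat b(v+a\epsilon,v'+a'\epsilon)=b(v,v')+a\,q(v')+a'\,q(v)$ is exactly the paper's $b(v,v')+a'b(v,v)+ab(v',v')$ rewritten via $q(v)=b(v,v)$, and the key observation in both is the $\mathbb{F}_2$-linearity of $q$ in the case $p=2$. Your added remark explaining \emph{why} a codimension-$1$ extension is generally unavoidable is a nice touch but does not change the argument.
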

\begin{proof}
If $p\neq2$, then $b$ is alternate, i.e., $b(v,v)=0$ for every $v\in V$.
We may therefore take $\widehat V=V$, $\widehat b=b$, and $\epsilon=0$.

Suppose that $p=2$.
By the skew-symmetry of $b$, the map $V\to W$, $v\mapsto b(v,v)$, is $\mathbb{F}_2$-linear.
Let $\widehat V=V\oplus\Span(\epsilon)$ for a new vector $\epsilon$.
We define a map $\widehat b\colon\widehat V\times\widehat V\to W$ by 
\[
\widehat b(v+a\epsilon,v'+a'\epsilon)=b(v,v')+a'b(v,v)+ab(v',v')
\]
for $v,v'\in V$ and $a,a'\in \mathbb{F}_2$.
A direct computation shows that it is $\mathbb{F}_2$-bilinear.
We have 
\[
\widehat b(v+a\epsilon,v+a\epsilon)=b(v,v)=\widehat b(v+a\epsilon,\epsilon).
\]
Therefore $(\widehat V,W,\widehat b,\epsilon)$ is an augmented $\mathbb{F}_2$-bilinear map.
\end{proof}

\section{$\kappa$-Algebras}
\label{section on kappa-algebras}
Following Bass and Tate \cite{BassTate73}, we define a \textsl{$\kappa$-algebra} $(A_\bullet,\epsilon_A)$ over the ring $R$ to be a graded $R$-algebra $A_\bullet=\bigoplus_{i\geq0}A_i$ with a distinguished central element $\epsilon_A\in A_1$ such that
\begin{enumerate}
\item[(i)]
$a\cdot(\epsilon_A+a)=0$ for every $a\in A_1$;
\item[(ii)]
$2\epsilon_A=0$;
\item[(iii)]
$A_\bullet$ is generated as an $R$-algebra by $A_1$.
\end{enumerate}

A \textsl{morphism} $f\colon (A_\bullet,\epsilon_A)\to (B_\bullet,\epsilon_B)$ of $\kappa$-algebras over $R$ is a graded $R$-algebra morphism $f\colon A_\bullet\to B_\bullet$ such that $f(\epsilon_A)=\epsilon_B$.
The $\kappa$-algebras over $R$ thus form a category, denoted $\kappa$-{\bf Alg}$_R$.

Given an $R$-module $V$, let ${\rm T}_\bullet(V)$ denote the tensor graded $R$-algebra over $V$.
For a $\kappa$-algebra $(A_\bullet,\epsilon_A)$, there is a unique morphism $f\colon ({\rm T}_\bullet(A_1),\epsilon_A)\to (A_\bullet,\epsilon_A)$ of $\kappa$-algebras which is the identity on $A_1$.
One says that $A_\bullet$ is \textsl{quadratic} (resp., \textsl{purely quadratic}) if $f$ is surjective and  $\Ker(f)$ is generated as a two-sided ideal of ${\rm T}_\bullet(A_1)$ by elements of $A_1\otimes_R A_1$ (resp., pure tensors $a\otimes a'$ in $A_1\otimes_R A_1$).
 
We define a functor
\[
{\bf F}\colon\kappa\hbox{\bf-Alg}_R
\to 
\hbox{\bf Aug-}R\hbox{\bf-Bilin}  
\]
by mapping a $\kappa$-algebra $(A_\bullet=\bigoplus_{i\geq0}A_i,\epsilon_A)$ over $R$ to the augmented $R$-bilinear map $(A_1,A_2,\mu,\epsilon_A)$, where $\mu\colon A_1\times A_1\to A_2$ is the multiplication map in $A_\bullet$.

\begin{rem}
\rm
By Lemma \ref{basic properties of augmented cup products}(b), $\mu$ is skew-symmetric.
In view of (iii), this implies that $A_\bullet$ is graded-commutative (see \cite{BassTate73}*{Prop.\ 2.2(a)}).
\end{rem}

Conversely, we define a functor
\[
{\bf G}\colon \hbox{\bf Aug-}R\hbox{\bf-Bilin} \to \kappa\hbox{\bf-Alg}_R
\]
by mapping an augmented $R$-bilinear map $(V,W,b,\epsilon)$ to the purely quadratic $\kappa$-structure $({\rm T}_\bullet(V)/I,\epsilon)$, where $I$ is the two-sided (graded) ideal in ${\rm T}_\bullet(V)$ generated by all pure tensors $v\otimes v'$, with $v,v'\in V$ satisfying $b(v,v')=0$.

The composition ${\bf G}\circ{\bf F}$ is the \textsl{purely quadratic hull} functor; see \cite{CheboluEfratMinac12}*{\S3}.

\section{Galois cohomology}
\label{section on Galois cohomology}
We now fix a prime number $p$ and take the ring $R=\mathbb{F}_p$.
Given a profinite group $G$, we write $H^i(G)=H^i(G,\mathbb{F}_p)$ for the $i$th profinite cohomology group of $G$ with respect to its trivial action on $\mathbb{F}_p$.
Then the cup product
\[
\cup\colon H^i(G)\times H^j(G)\to H^{i+j}(G)
\]
is a skew-symmetric $\mathbb{F}_p$-bilinear map \cite{NeukirchSchmidtWingberg}*{Ch.\ I, Prop.\ 1.4.4}.

As in the Introduction, we consider the cup product in a Galois-theoretic context as follows:
Let $F$ be a field and let $G_F={\rm Gal}(F^{\rm sep}/F)$ be its absolute Galois group, where $F^{\rm sep}$ denotes the separable closure of $F$.
We will further assume that $F$ contains a root of unity of order $p$, so in particular, ${\rm Char}\,F\neq p$. 
We fix an isomorphism between the group $\mu_p$ of all roots of unity of order $p$ in $F^{\rm sep}$ and $\mathbb{Z}/p$.
Let $\zeta\in\mu_p$ correspond to $\bar1\in\mathbb{Z}/p$.
For simplicity we abbreviate $H^i(F)=H^i(G_F)$.
We write again $H^\bullet(F)=\bigoplus_{i\geq0}H^i(F)$ for the cohomology graded $\mathbb{F}_p$-algebra with the cup product.

Given a field extension $E/F$, we write $\Res_{E/F}=\Res^i_{E/F}\colon H^i(F)\to H^i(E)$ for the functorial (restriction) map.
Note that here we do not assume that $E/F$ is algebraic.

One has the Kummer isomorphism 
\[
F^\times/(F^\times)^p\isom H^1(G_F,\mu_p)=H^1(F).
\]
To every $a\in F^\times$ we associate its Kummer element $(a)_F\in H^1(F)$.

Further, the identification $\mathbb{Z}/p=\mu_p$ induces a canonical isomorphism between $H^2(F)$ and the $p$-torsion part ${}_p\mathrm{Br}(F)$ of the Brauer group of $F$.
Under this isomorphism, a cup product $(a_1)_F\cup(a_2)_F$, where $a_1,a_2\in F^\times$, corresponds to the similarity class of the \textsl{symbol algebra} 
$(a_1,a_2)_{F,\zeta}$ over $F$, generated by elements $t_1,t_2$ subject to the relations $t_1^p=a_1$, $t_2^p=a_2$, and $t_1t_2=\zeta t_2t_1$.

Recall that the \textsl{Milnor ${\rm K}$-algebra} of the field $F$ is the graded $\mathbb{Z}$-algebra 
\[
{\rm K}^M_\bullet(F)={\rm T}_\bullet(F^\times)/J,
\] 
where ${\rm T}_\bullet(F^\times)$ is the tensor $\mathbb{Z}$-algebra over the multiplicative group $F^\times$ of $F$, and $J$ is its two-sided (graded) ideal generated by all pure tensors $a\otimes (1-a)$, with $0,1\neq a\in F$.
It is a purely quadratic $\kappa$-algebra over $\mathbb{Z}$ with the distinguished element $\epsilon_K=-1$ \cite{BassTate73}*{Example (2.1)}.
Therefore ${\rm K}^M_\bullet(F)\otimes(\mathbb{Z}/p)$ is a purely quadratic $\kappa$-algebra over $\mathbb{F}_p$.

By the Norm Residue Theorem (\cite{HaesemeyerWeibel19}, \cite{Voevodsky03}, \cite{Voevodsky11}), the Kummer map $F^\times\to H^1(F)$, $a\mapsto (a)_F$, induces a graded algebra isomorphism
\begin{equation}
\label{norm residue isomorphism}
{\rm K}^M_\bullet(F)\otimes(\mathbb{Z}/p)\xrightarrow{\sim}H^\bullet(F).
\end{equation}
It follows that $(H^\bullet(F),(-1)_F)$ is a $\kappa$-algebra over $\mathbb{F}_p$.

By applying the functor $\bf F$, we deduce that the cup product map $\cup\colon H^1(F)\times H^1(F)\to H^2(F)$ gives rise to a surjective augmented $\mathbb{F}_p$-bilinear map
\begin{equation}
\label{augmented bilinear maps of field type}
\bigl(H^1(F),H^2(F),\cup,(-1)_F\bigr).
\end{equation}

Given a field extension $E/F$, we have a morphism of $\kappa$-structures
\[
\Res=\oplus_{i\geq0}\Res_{E/F}^i\colon H^\bullet(F)\to H^\bullet(E),
\]
as well as a morphism of augmented $\mathbb{F}_p$-bilinear maps
\[
\begin{split}
\Res_{E/F}&=(\Res_{E/F}^1,\Res_{E/F}^2)\colon\\ 
&\bigl(H^1(F),H^2(F),\cup,(-1)_F\bigr)\to \bigl(H^1(E),H^2(E),\cup,(-1)_E\bigr).
\end{split}
\]

We define $\kappa$-{\bf Alg-Fields} to be the subcategory of $\kappa$-{\bf Alg}$_{\mathbb{F}_p}$ consisting of all $\kappa$-algebras over $\mathbb{F}_p$ of the form $(H^\bullet(F),(-1)_F)$, and with restriction maps as morphisms.
We call its objects \textsl{$\kappa$-algebras over $\mathbb{F}_p$ of field type}.

Likewise we define  {\bf Aug-Bilin-Fields} to be the subcategory of {\bf Aug-}$\mathbb{F}_p${\bf-Bilin} consisting of all (surjective) augmented $\mathbb{F}_p$-bilinear maps of the form (\ref{augmented bilinear maps of field type}) for some $F$, with restriction maps as morphisms.
We call its objects \textsl{augmented $\mathbb{F}_p$-bilinear maps of field type}.

\begin{thm}
\label{equivalence of the Galois categories}
The restricted functors
\[
\hbox{$\kappa$-{\bf Alg-Fields}} 
\ \  
\begin{array}{c}
\overset{\bf F}{\xrightarrow{\qquad\quad}} \\
\underset{\bf G}{\xleftarrow{\qquad\quad}}
\end{array}
\ \ 
\hbox{{\bf Aug-Bilin-Fields}}  
\]
form an equivalence of categories.
\end{thm}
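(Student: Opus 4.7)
The plan is to verify that the restricted functors land in the stated subcategories and then to exhibit natural isomorphisms witnessing the equivalence. The essential input throughout is the pure quadraticity of $H^\bullet(F)$, provided by the Norm Residue Theorem via the isomorphism \eqref{norm residue isomorphism} together with the pure quadraticity of ${\rm K}^M_\bullet(F)\otimes(\mathbb{Z}/p)$.

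That $\mathbf{F}$ lands in \textbf{Aug-Bilin-Fields} was already established in the paragraphs preceding the theorem: $(H^\bullet(F),(-1)_F)$ is sent to the surjective field-type augmented bilinear map $(H^1(F),H^2(F),\cup,(-1)_F)$, and a restriction morphism $\Res\colon H^\bullet(F)\to H^\bullet(E)$ goes to the pair $(\Res^1_{E/F},\Res^2_{E/F})$, which is by definition a restriction morphism in \textbf{Aug-Bilin-Fields}. To see that $\mathbf{G}$ lands in $\kappa$-\textbf{Alg-Fields}, I would apply it to $(H^1(F),H^2(F),\cup,(-1)_F)$ to obtain $({\rm T}_\bullet(H^1(F))/I,(-1)_F)$, where $I$ is the two-sided ideal generated by pure tensors $\chi\otimes\chi'$ with $\chi\cup\chi'=0$. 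This ideal coincides with the kernel of the canonical surjection ${\rm T}_\bullet(H^1(F))\twoheadrightarrow H^\bullet(F)$, which is precisely the content of pure quadraticity; hence there is a canonical isomorphism $\eta_F\colon{\rm T}_\bullet(H^1(F))/I\xrightarrow{\sim}H^\bullet(F)$ of $\kappa$-algebras. On morphisms, $\mathbf{G}$ of a restriction pair $(\Res^1_{E/F},\Res^2_{E/F})$ is, via the universal property of the tensor algebra, the unique graded algebra homomorphism extending $\Res^1_{E/F}$; after identifying via $\eta$, this agrees with the genuine restriction $\Res\colon H^\bullet(F)\to H^\bullet(E)$ since both are graded algebra morphisms on an algebra generated in degree 1.

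For the equivalence, the unit $\eta\colon\mathbf{G}\mathbf{F}\Rightarrow\mathrm{id}$ is the family $\{\eta_F\}$ constructed above. The counit $\varepsilon\colon\mathbf{F}\mathbf{G}\Rightarrow\mathrm{id}$ at $(H^1(F),H^2(F),\cup,(-1)_F)$ has degree 1 component $\mathrm{id}_{H^1(F)}$ and degree 2 component the isomorphism $(H^1(F)\otimes H^1(F))/I_2\xrightarrow{\sim}H^2(F)$ induced by the cup product, where $I_2=I\cap(H^1(F)\otimes H^1(F))$; this is an isomorphism precisely because pure quadraticity identifies $I_2$ with $\Ker(\cup)$, while the quadratic property forces surjectivity of $\cup$. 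Naturality of both transformations is routine from the functoriality of $\mathbf{F}$ and $\mathbf{G}$ together with the compatibility of restriction with cup products.

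The sole nontrivial point is the identification $I_2=\Ker(\cup)$, i.e.\ pure quadraticity; without it, $\mathbf{G}$ would produce a strictly larger quotient of ${\rm T}_\bullet(H^1(F))$ than $H^\bullet(F)$, and both $\eta$ and the degree 2 part of $\varepsilon$ would fail to be isomorphisms. The theorem is therefore best viewed as a categorical repackaging of the Norm Residue Theorem.
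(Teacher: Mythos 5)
Your proposal is correct and follows essentially the same route as the paper: everything reduces to identifying the ideal $I$ of null pure tensors with the kernel of ${\rm T}_\bullet(H^1(F))\to H^\bullet(F)$, after which the unit and counit (identity in degree $1$, the cup-product-induced isomorphism $H^1(F)^{\otimes 2}/I_2\to H^2(F)$ in degree $2$) are forced. The only cosmetic difference is that you deduce this identification formally from the already-stated pure quadraticity of $H^\bullet(F)$, whereas the paper re-derives it by factoring the Norm Residue isomorphism through ${\rm K}^M_\bullet(F)\otimes(\mathbb{Z}/p)$ using the vanishing of the Steinberg symbols $(a)_F\cup(1-a)_F$ --- logically the same input.
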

\begin{proof}
It is straightforward to verify that $\bf F$ and $\bf G$ map restriction morphisms to restriction morphisms.

Next take a field $F$ containing a root of unity of order $p$.
We denote the image under ${\bf G}\circ{\bf F}$ of the $\kappa$-algebra $(H^\bullet(F),(-1)_F)$ by
\[
(\widehat H^\bullet(F)=\bigoplus_{i\geq0}\widehat H^i(F),(-1)_F).
\]
Thus $\widehat H^\bullet(F)={\rm T}_\bullet(H^1(F))/I$, where $I$ is the two-sided ideal generated by all pure tensors $(a)_F\otimes(b)_F$ with $(a)_F\cup(b)_F=0$ in $H^2(F)$.
Since $(a)_F\cup(1-a)_F=0$ for every $0,1\neq a\in F$, the isomorphism (\ref{norm residue isomorphism}) factors as
\[
{\rm K}^M_\bullet(F)\otimes(\mathbb{Z}/p)\to \hat H^\bullet(F)\to H^\bullet(F),
\]
In view of the Kummer isomorphism, the left map is surjective, and we deduce that both maps are isomorphisms.
This implies that ${\bf G}\circ{\bf F}={\bf Id}$ on $\kappa$-{\bf Alg-Fields}.

For the second composition, we observe that
\begin{equation}
\label{F circ G}
({\bf F}\circ{\bf G})(H^1(F),H^2(F),\cup,(-1)_F)=(\widehat H^1(F),\widehat H^2(F),b,(-1)_F),
\end{equation}
where $b\colon \widehat H^1(F)\times \widehat H^1(F)\to \widehat H^2(F)$ is induced by the tensor product.
Now $\widehat H^1(F)=H^1(F)$.
Further, for the degree $2$ component $I_2$ of $I$, we obtain from the previous paragraph that the cup product induces an isomorphism $\widehat H^2(F)=H^1(F)^{\otimes2}/I_2\to H^2(F)$. 
Hence the right hand-side of (\ref{F circ G}) is isomorphic to $(H^1(F),H^2(F),\cup,(-1)_F)$.
Therefore ${\bf F}\circ{\bf G}$ is naturally isomorphic to the identity functor $\bf Id$ on {\bf Aug-Bilin-Fields}.
\end{proof}

\begin{rem}
\rm
In the above discussion, we may replace the absolute Galois group $G_F$ by its maximal pro-$p$ quotient $G_F(p)$.
Indeed, it is a consequence of the Norm Residue Theorem that the inflation map $\inf\colon H^i(G_F(p))\to H^i(G_F)$ is an isomorphism for all $i$ \cite{CheboluEfratMinac12}*{Remark 8.2}.
\end{rem}

\section{Generic splitting fields}
\label{section on generic splitting fields}
The following proposition restates in cohomological terms some main facts about Amitsur's construction of generic splitting fields of central simple algebras (\cite{Amitsur55}, \cite{Amitsur82}, \cite{Roquette63}):

\begin{prop}
\label{generic splitting fields}
Given a field $K$ and a cohomology element $\omega\in H^2(K)$, there exists a regular extension $L$ of $K$ with finite relative transcendence degree such that the following sequence is exact:
\[
0\to\langle\omega\rangle\to H^2(K)\xrightarrow{\Res_{L/K}}H^2(L).
\]
\end{prop}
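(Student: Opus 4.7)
The plan is to invoke Amitsur's theorem on generic splitting fields in its classical Brauer-group formulation, using the Merkurjev--Suslin identification $H^2(K)\cong{}_p\mathrm{Br}(K)$ induced by the chosen isomorphism $\mathbb{Z}/p\isom\mu_p$. Under this identification, the given class $\omega$ corresponds to a Brauer class $[A]$ of order dividing $p$, represented by a central simple $K$-algebra $A$; for concreteness one may take $A$ to be a central simple algebra of degree $p$ (with $A$ trivial when $\omega=0$).

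First I would dispose of the trivial case $\omega=0$ by taking $L=K$, so that $\Res_{L/K}$ is the identity and the stated sequence is trivially exact. Assuming $\omega\neq0$, I would let $L=K(A)$ be the function field of the Severi--Brauer variety $\mathrm{SB}(A)$ attached to $A$. Since $\mathrm{SB}(A)$ is a geometrically integral smooth projective $K$-variety (it becomes projective space after base change to a splitting field), its function field $L$ is a regular extension of $K$, of relative transcendence degree $\deg(A)-1=p-1$, which is finite as required.

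The main content is then Amitsur's theorem, which asserts
\[
\Ker\bigl(\mathrm{Br}(K)\xrightarrow{\Res_{L/K}}\mathrm{Br}(L)\bigr)=\langle[A]\rangle.
\]
Intersecting this equality with the $p$-torsion subgroup ${}_p\mathrm{Br}(K)\cong H^2(K)$ yields exactly $\langle\omega\rangle$, since $[A]$ already has order dividing $p$ so $\langle[A]\rangle\subseteq{}_p\mathrm{Br}(K)$. Transporting along the Brauer--cohomology isomorphism, which is compatible with restriction along the field extension $L/K$, gives the desired exact sequence.

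There is no real obstacle here beyond citing Amitsur's result, which is the deep ingredient; the rest of the argument is bookkeeping in translating between the cohomological and Brauer-theoretic descriptions and checking the regularity and finite transcendence degree of $L/K$, both of which are standard properties of Severi--Brauer function fields.
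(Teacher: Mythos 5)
Your approach is exactly the one the paper intends: the paper offers no proof of this proposition, stating only that it ``restates in cohomological terms'' Amitsur's results and citing \cite{Amitsur55}, \cite{Amitsur82}, \cite{Roquette63}, and your argument via the function field of the Severi--Brauer variety together with Amitsur's computation of $\Ker\bigl(\mathrm{Br}(K)\to\mathrm{Br}(K(\mathrm{SB}(A)))\bigr)=\langle[A]\rangle$ is precisely that classical route. One inaccuracy to correct: a class $\omega\in{}_p\mathrm{Br}(K)$ need \emph{not} be represented by a central simple algebra of degree $p$ --- by Merkurjev--Suslin it is a product of symbol algebras, but its index can be any power of $p$ --- so you cannot assert the transcendence degree is $p-1$. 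Instead take $A$ to be the underlying division algebra (or any representative) of the class; then $\mathrm{SB}(A)$ has dimension $\deg(A)-1$, which is all that is needed since the proposition only requires the relative transcendence degree to be finite. With that adjustment the rest of your bookkeeping (regularity of $L/K$ from geometric integrality of $\mathrm{SB}(A)$, intersecting Amitsur's kernel with the $p$-torsion subgroup, and compatibility of the Brauer--cohomology identification with restriction) is correct.
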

By a slight abuse of notation, we call $L$ a \textsl{generic splitting field} of $\omega$ over $K$.

We fix an algebraically closed field $\Omega$ of sufficiently large transcendence degree over the ground field.
We may assume that all generic splitting fields in the following constructions will be taken inside $\Omega$.

\begin{prop}
\label{killing all algebras}
Let $K$ be a field containing a root of unity of order $p$ and let $A$ be a subgroup of $H^2(K)$. 
Then there is a field extension $L$ of $K$ such that
\begin{enumerate}
\item[(i)]
$\Res_{L/K}\colon H^1(K)\to H^1(L)$ is injective;  and
\item[(ii)]
$\Res_{L/K}\colon A\to H^2(L)$ is an isomorphism.  
\end{enumerate}
\end{prop}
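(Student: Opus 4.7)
The plan is to construct $L$ as a countable tower of generic splitting field extensions of $K$ that at each stage kill a vector-space complement of the image of $A$ in $H^2$, so that $A$ survives while all other cohomology classes are eventually annihilated.

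The key intermediate tool is a ``subspace-killing'' version of Proposition \ref{generic splitting fields}: for any field $M$ containing a root of unity of order $p$ and any $\mathbb{F}_p$-subspace $B\le H^2(M)$, one can construct a regular extension $M_B/M$ with $\Ker(\Res_{M_B/M})=B$. Choose an $\mathbb{F}_p$-basis $\{\omega_\alpha\}_{\alpha<\kappa}$ of $B$ and build, inside $\Omega$, a transfinite tower $M_0=M\subseteq M_1\subseteq\cdots$, where $M_{\alpha+1}$ is a generic splitting field of $\Res_{M_\alpha/M}(\omega_\alpha)$ (via Proposition \ref{generic splitting fields}) and $M_\lambda=\bigcup_{\alpha<\lambda}M_\alpha$ at limit ordinals. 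A straightforward transfinite induction---using that $\Res_{M_\alpha/M}(\omega_\alpha)$ is nonzero since $\{\omega_\beta\}$ is an $\mathbb{F}_p$-basis---shows that $\Ker(\Res_{M_\alpha/M})$ is the $\mathbb{F}_p$-span of $\{\omega_\beta:\beta<\alpha\}$, via the identity $\Ker(\Res_{M_{\alpha+1}/M})=\Ker(\Res_{M_\alpha/M})+\langle\omega_\alpha\rangle$ and continuity of cohomology at limit steps. Letting $M_B$ be the top of the tower yields the required extension; regularity is preserved at successor steps by Proposition \ref{generic splitting fields} and at limits by taking unions.

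For the main construction, set $L_0=K$, and recursively define $L_{n+1}$ by choosing an $\mathbb{F}_p$-vector space complement $C_n$ of $\Res_{L_n/K}(A)$ in $H^2(L_n)$ and letting $L_{n+1}=(L_n)_{C_n}$. Set $L=\bigcup_n L_n$. For (i), the extension $L/K$ is regular (being a composition of regular extensions), and since $\mu_p\subseteq K$, Kummer theory yields injectivity of $\Res_{L/K}$ on $H^1$: any $a\in K^\times$ whose $p$-th root lies in $L$ has that root algebraic over $K$, hence in $K$ by relative algebraic closedness. For (ii), at each stage $\Ker(\Res_{L_{n+1}/L_n})=C_n$, which meets $\Res_{L_n/K}(A)$ only at $0$ by choice of $C_n$; hence $\Res_{L_n/K}|_A$ is injective at every stage, and continuity of cohomology transfers this to $\Res_{L/K}|_A$. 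Surjectivity onto $H^2(L)$ follows because any $\omega\in H^2(L)$ comes, by continuity, from some $\omega_n\in H^2(L_n)$; decomposing $\omega_n=\alpha_n+\beta_n$ with $\alpha_n\in\Res_{L_n/K}(A)$ and $\beta_n\in C_n$, the summand $\beta_n$ vanishes in $H^2(L_{n+1})$, so $\omega=\Res_{L/L_n}(\alpha_n)\in\Res_{L/K}(A)$.

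The main obstacle is the bookkeeping in the intermediate step: when iteratively taking generic splitting fields, the cumulative kernel on $H^2$ could, if one is careless, grow to swallow $\Res_{L_n/K}(A)$. Indeed, attempting to kill the set-theoretic complement $H^2(L_n)\setminus \Res_{L_n/K}(A)$ directly would produce a kernel spanning all of $H^2(L_n)$, since the complement of a proper $\mathbb{F}_p$-subspace spans the ambient space. The decisive point is to choose $C_n$ as a vector-space complement, which is possible because everything is an $\mathbb{F}_p$-vector space, ensuring that killing $C_n$ leaves $\Res_{L_n/K}(A)$ intact. The outer iteration over $n$ is then necessary because each transcendental extension may introduce brand-new classes in $H^2$ that lie outside the image of $A$, and these must be killed at subsequent stages.
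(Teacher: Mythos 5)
Your proof is correct, but it is organized quite differently from the paper's. The paper runs a single Zorn's lemma argument: it takes the poset of all intermediate fields $K\subseteq L\subseteq\Omega$ on which $\Res_{L/K}$ is injective on $H^1(K)$ and on $A$, extracts a maximal element, and then shows surjectivity of $\Res_{L/K}\colon A\to H^2(L)$ by contradiction --- if some $\omega\in H^2(L)$ lay outside $\Res_{L/K}(A)$, a generic splitting field of $\omega$ would be a strictly larger member of the poset, since its kernel $\langle\omega\rangle$ meets $\Res_{L/K}(A)$ trivially. You instead build $L$ explicitly: a transfinite inner tower that kills a chosen linear complement $C_n$ of $\Res_{L_n/K}(A)$ one basis vector at a time, nested inside a countable outer tower that handles the new classes created by each transcendental extension, with surjectivity read off from the decomposition $\omega_n=\alpha_n+\beta_n$. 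Both arguments rest on the same two pillars (Amitsur's generic splitting fields kill exactly $\langle\omega\rangle$, and one only ever kills classes in a complement of the image of $A$), and your kernel bookkeeping for the towers ($\Ker(\Res_{M_{\alpha+1}/M})=\Ker(\Res_{M_\alpha/M})+\langle\omega_\alpha\rangle$, plus continuity of $H^2$ along unions) is sound. The trade-off: the paper's maximality argument avoids choosing complements and bases and dispenses with the two-level iteration in a few lines, while your construction is more concrete and makes visible \emph{why} the image of $A$ survives --- your closing remark about the danger of killing a set-theoretic rather than linear complement is exactly the point the paper's injectivity-on-$A$ condition encodes implicitly in its definition of the poset.
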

\begin{proof}
We consider the set $\mathcal{S}$ of all intermediate fields $K\subseteq L\subseteq\Omega$ such that (i) holds as well as:
\\
\null\quad  \ (ii') \   $\Res_{L/K}\colon H^2(K)\to H^2(L)$ is injective on $A$.
\\
It clearly contains $K$.

We partially order $\mathcal{S}$ by setting $L\preceq L'$ if and only if $L\subseteq L'$, the map $\Res_{L'/L}\colon H^1(L)\to H^1(L')$ is injective, and $\Res_{L'/L}\colon H^2(L)\to H^2(L')$ is injective on $\Res_{L/K}(A)$.

For a nonempty totally ordered subset $\mathcal{C}$ of $\mathcal{S}$, the union $L_\mathcal{C}$ of the fields in $\mathcal{C}$ satisfies $H^i(L_\mathcal{C})=\varinjlim_{L\in\mathcal{C}}H^i(L)$ for every $i$.
Hence $L_\mathcal{C}\in\mathcal{S}$, and $L\preceq L_\mathcal{C}$ for every $L\in\mathcal{C}$.
Zorn's lemma therefore yields a maximal field $L$ in $(\mathcal{S},\preceq)$.

It remains to prove that $\Res_{L/K}(A)=H^2(L)$.
To this end suppose that $\omega\in H^2(L)$ is not in $\Res_{L/K}(A)$.
Let $L'$ be a generic splitting field of $\omega$ over $L$.
It is a regular extension of $L$, so $\Res_{L'/L}\colon H^1(L)\to H^1(L')$ is injective.
The kernel $\langle\omega\rangle$ of $\Res_{L'/L}\colon H^2(L)\to H^2(L')$ has trivial intersection with $\Res_{L/K}(A)$, implying that $\Res_{L'/L}\colon H^2(L)\to H^2(L')$ is injective on $\Res_{L/K}(A)$.
It follows using (ii') that $\Res_{L'/K}\colon H^2(K)\to H^2(L')$ is injective on $A$.
We conclude that $L'\in\mathcal{S}$.
The maximality of $L$ in $\mathcal{S}$ implies that $L'=L$, contrary to $\omega\neq0$. 
\end{proof}

\section{Power series fields}
Let $\Gamma=(\Gamma,\leq)$ be an ordered abelian group.
Let $K$ be a field and let $E=K((\Gamma))$ be the field of all formal power series $z=\sum_{\gamma\in \Gamma}a_\gamma t^\gamma$ with coefficients $a_\gamma$ in $K$, exponents in $\Gamma$, and well ordered support $\Supp(z)=\{\gamma\in\Gamma\ |\  a_\gamma\neq0\}$ \cite{Efrat06}*{\S2.8}.
It is Henselian with respect to the valuation $v\colon E^\times\to\Gamma$ defined by $v(z)=\min(\Supp(z))$ \cite{Efrat06}*{Cor.\ 18.4}.
We identify the elements of $K$ as constant power series in $E$, thus making $K$ a subfield of $E$.
For instance, $K((\mathbb{Z}))$, with the canonical order of $\mathbb{Z}$, is the usual formal power series field $K((t))$ with its canonical valuation.

The cohomology groups of $K$ and $E$ are related as follows \cite{Wadsworth83}*{\S3}:
We choose a totally ordered set $(\Lambda,\leq)$ and elements $\gamma_l$, $l\in \Lambda$, of $\Gamma$ whose cosets form an $\mathbb{F}_p$-linear basis of $\Gamma/p\Gamma$.
Then for every $i\geq0$,
\[
H^i(E)=\bigoplus_{j=0}^i\bigoplus_{{l_1,\ldots, l_j\in \Lambda}\atop{l_1<\cdots<l_j}}\Res_{E/K}(H^{i-j}(K))\cup (t^{l_1})_E\cup\cdots\cup(t^{l_j})_E.
\]
Moreover, for every $0\leq j\leq i$ and $l_1<\cdots<l_j$ in $\Lambda$, the map 
\[
H^{i-j}(K)\to H^i(E), \quad
\omega\mapsto \Res_{E/K}(\omega)\cup (t^{l_1})_E\cup\cdots\cup(t^{l_j})_E, 
\]
is injective.
In particular,
\begin{equation}
\label{structure of cohomology in extensions}
\begin{split}
H^1(E)&\isom H^1(K)\oplus(\Gamma/p\Gamma),\\
H^2(E)&\isom  H^2(K)\oplus\bigl(H^1(K)\otimes(\bigoplus_{l\in \Lambda}(\mathbb{Z}/p)\bigr)\oplus\bigoplus_{l_1<l_2}(\mathbb{Z}/p).
\end{split}
\end{equation}

We immediately obtain:

\begin{lem}
\label{injectivity of restriction in power field  extensions}
For $E=K((\Gamma))$, the restriction morphism
\[
\Res_{E/K}\colon (H^1(K),H^2(K),\cup,(-1)_K)\to(H^1(E),H^2(E),\cup,(-1)_E)
\]
is a monomorphism of augmented $\mathbb{F}_p$-bilinear maps.
\end{lem}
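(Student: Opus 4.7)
The proof should be essentially a direct reading-off from the structural description in equation~(\ref{structure of cohomology in extensions}) and the injectivity assertion preceding it, so my plan is just to organise those pieces into the four conditions required for a monomorphism of augmented bilinear maps.

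First I would check the two structural conditions: compatibility with the distinguished element and with the bilinear form. Since $-1\in K\subseteq E$, the Kummer construction is functorial in the field, so $\Res_{E/K}((-1)_K)=(-1)_E$. Compatibility with the cup products is standard functoriality of restriction on Galois cohomology, so the square in the definition of a morphism of augmented bilinear maps commutes.

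The remaining content is the injectivity of $\Res_{E/K}$ in degrees $1$ and $2$. Both fall out of the case $j=0$ of the injectivity clause quoted from \cite{Wadsworth83}: the map
\[
H^{i}(K)\to H^{i}(E),\qquad \omega\mapsto\Res_{E/K}(\omega),
\]
is injective for every $i\geq0$. Equivalently, in the decomposition (\ref{structure of cohomology in extensions}), $\Res_{E/K}$ identifies $H^1(K)$ (resp.\ $H^2(K)$) with the first summand of $H^1(E)$ (resp.\ of $H^2(E)$). Specialising to $i=1$ gives injectivity of $\Res_{E/K}^1$, and to $i=2$ gives injectivity of $\Res_{E/K}^2$.

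Combining these observations, $(\Res_{E/K}^1,\Res_{E/K}^2)$ is a morphism of augmented $\mathbb{F}_p$-bilinear maps in which both components are injective, which is by definition a monomorphism. Since the work has already been packaged into the cited Wadsworth decomposition, there is no genuine obstacle here; the only thing to be slightly careful about is that the injectivity statement quoted from \cite{Wadsworth83} really does apply with $j=0$ to each degree $i$ separately, which is how both $H^1$- and $H^2$-injectivity are obtained simultaneously.
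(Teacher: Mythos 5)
Your proposal is correct and matches the paper's reasoning: the lemma is stated there as an immediate consequence of the Wadsworth decomposition (\ref{structure of cohomology in extensions}) and the accompanying injectivity assertion, which is exactly the $j=0$ case you invoke for $i=1$ and $i=2$. The compatibility checks with the cup product and with $(-1)_K\mapsto(-1)_E$ are the standard functoriality facts the paper leaves implicit, so nothing is missing.
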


\begin{lem}
\label{H2 large}
Denoting $m=\dim_{\mathbb{F}_p}(\Gamma/p\Gamma)$, one has $\dim_{\mathbb{F}_p}H^2(E)\geq m$ unless $H^1(K)=0$ and $m\leq2$.
\end{lem}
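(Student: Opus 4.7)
The plan is to read off the three summands in the decomposition (\ref{structure of cohomology in extensions}) of $H^2(E)$ and show that in every case not excluded by the statement, at least one summand already has $\mathbb{F}_p$-dimension $\geq m$. The first and third summands correspond to the indexing set $\Lambda$, which has cardinality $m$ by construction (since the $\gamma_l$ form an $\mathbb{F}_p$-basis of $\Gamma/p\Gamma$). So the second summand has dimension $m \cdot \dim_{\mathbb{F}_p}H^1(K)$ and the third has dimension equal to the number of two-element subsets of $\Lambda$.

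First I would dispose of the case $H^1(K)\neq 0$: the second summand alone has $\mathbb{F}_p$-dimension at least $m\cdot 1=m$, so the inequality holds. This leaves the case $H^1(K)=0$, in which $H^2(E)$ coincides with $H^2(K)\oplus \bigoplus_{l_1<l_2}(\mathbb{Z}/p)$, and the bound must come from the pair-indexed summand. If $m$ is infinite, the set of pairs in $\Lambda$ has cardinality $|\Lambda|^2=|\Lambda|=m$, so the pair-indexed summand has dimension $m$ and we are done. If $m$ is finite, the pair-indexed summand has dimension $\binom{m}{2}=m(m-1)/2$, which is $\geq m$ exactly when $m\geq 3$. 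The residual cases $m\in\{0,1,2\}$ together with $H^1(K)=0$ are precisely those excluded from the conclusion.

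There is no real obstacle: the only subtlety is the bookkeeping when $\Lambda$ is infinite, where one uses the standard cardinal identity $|\Lambda|^2=|\Lambda|$ to see that the number of ordered pairs $l_1<l_2$ matches $|\Lambda|=m$. The proof is therefore just a case analysis on whether $H^1(K)$ vanishes and, in the vanishing case, on the size of $m$.
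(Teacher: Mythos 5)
Your proof is correct and takes the same route as the paper, which simply cites the decomposition in (\ref{structure of cohomology in extensions}); you have merely filled in the routine case analysis (middle summand of dimension $m\cdot\dim_{\mathbb{F}_p}H^1(K)$ when $H^1(K)\neq0$, and $\binom{m}{2}\geq m$ for $m\geq3$ or $m$ infinite otherwise). The only blemish is the harmless slip where you write ``first and third summands'' when you mean the second and third; the computation that follows uses the correct summands.
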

\begin{proof}
This follows from (\ref{structure of cohomology in extensions}).
\end{proof}

We now prove a weak form of the Main Theorem in the special case where $V=\Span(\epsilon)$.

\begin{prop}
\label{the case V=epsilon}
Let $(V,W,b,\epsilon)$ be an augmented $\mathbb{F}_p$-bilinear map with $V=\Span(\epsilon)$.
Then there exists a field $E$ containing a root of unity of order $p$ and a monomorphism 
\[
(f_1,f_2)\colon (V,W,b,\epsilon)\to(H^1(E),H^2(E),\cup,(-1)_E).
\]
\end{prop}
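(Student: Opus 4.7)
The plan is to realize $E$ as a generalized power series field $K((\Gamma))$ over a carefully chosen base field $K$ (containing $\zeta_p$) together with an ordered abelian group $\Gamma$, and then exploit the decomposition (\ref{structure of cohomology in extensions}) in conjunction with Lemma~\ref{injectivity of restriction in power field  extensions}. Since $V=\Span(\epsilon)$ has $\mathbb{F}_p$-dimension at most one, the bilinear map $b$ is determined by the single value $w_0:=b(\epsilon,\epsilon)\in W$, and the entire content of the proposition reduces to matching the pair $(\epsilon,w_0)$ with $((-1)_E,(-1)_E\cup(-1)_E)$ and then extending $f_2$ arbitrarily to an injection.

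First I would choose $K$ so that the vanishing/non-vanishing pattern of $((-1)_K,(-1)_K\cup(-1)_K)$ matches that of $(\epsilon,w_0)$. Three subcases arise. When $\epsilon=0$ (which is automatic for odd $p$), I take $K=\mathbb{Q}(\zeta_{2p})$, in which $-1=\zeta_{2p}^{\,p}\in(K^\times)^p$, so $(-1)_K=0$. When $p=2$, $\epsilon\neq 0$, and $w_0=0$, I take $K=\mathbb{F}_3$: there $-1=2$ is a non-square while $H^2(\mathbb{F}_3)=0$ forces $(-1)_K\cup(-1)_K=0$. When $p=2$, $\epsilon\neq 0$, and $w_0\neq 0$, I take $K=\mathbb{R}$, where $(-1)_K\cup(-1)_K$ is the nontrivial class of Hamilton's quaternions in $H^2(\mathbb{R})=\mathbb{Z}/2$.

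Next I would pick an ordered abelian group $\Gamma$ with $\dim_{\mathbb{F}_p}(\Gamma/p\Gamma)$ sufficiently large --- concretely $\Gamma=\bigoplus_{i\in I}\mathbb{Z}$ with the lexicographic order, for some index set $I$ with $\binom{|I|}{2}\geq\dim_{\mathbb{F}_p}W$ --- and set $E=K((\Gamma))$. By (\ref{structure of cohomology in extensions}), $H^2(E)$ contains $\Res_{E/K}(H^2(K))$ together with a direct summand $\bigoplus_{l_1<l_2}\mathbb{F}_p$ of cardinality $\binom{|I|}{2}$, so $\dim_{\mathbb{F}_p}H^2(E)\geq\dim_{\mathbb{F}_p}W$. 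Since $(-1)_E=\Res_{E/K}((-1)_K)$, the element $(-1)_E\cup(-1)_E=\Res_{E/K}((-1)_K\cup(-1)_K)$ has the prescribed vanishing behavior in $H^2(E)$.

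Finally I would set $f_1(\epsilon)=(-1)_E$ and extend the consistent partial assignment $w_0\mapsto(-1)_E\cup(-1)_E$ to an $\mathbb{F}_p$-linear injection $f_2\colon W\hookrightarrow H^2(E)$; such an extension exists by routine linear algebra given the dimension inequality and the matching vanishing pattern on the line $\mathbb{F}_p w_0$. Commutativity of the bilinear-map diagram then reduces to the identity $f_2(b(\epsilon,\epsilon))=f_1(\epsilon)\cup f_1(\epsilon)$, which holds by construction, and $f_1(\epsilon)=(-1)_E$ is immediate. The main obstacle I anticipate is the case analysis for the base field, specifically arranging that $(-1)_K\cup(-1)_K$ has the correct vanishing behavior when $p=2$; once $K$ is fixed, the power series lifting and the linear-algebra extension are routine.
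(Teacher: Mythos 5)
Your proof is correct and follows essentially the same strategy as the paper's: choose a base field $K$ whose pair $((-1)_K,(-1)_K\cup(-1)_K)$ matches the vanishing pattern of $(\epsilon,b(\epsilon,\epsilon))$, pass to a power series field $E=K((\Gamma))$ to make $H^2(E)$ large, and then define $f_1(\epsilon)=(-1)_E$ and extend $f_2$ by linear algebra. The only difference is the choice of base fields in the three cases ($\mathbb{Q}(\zeta_{2p})$, $\mathbb{F}_3$, $\mathbb{R}$ versus the paper's $\mathbb{C}$, a $\mathbb{Z}_2$-extension of $\mathbb{Q}$ without $\sqrt{-1}$, and $\mathbb{Q}_2$), and your choices all work.
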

\begin{proof}
First we take a field $K$ as follows:
\begin{itemize}
\item
If $\epsilon=0$, let $K=\mathbb{C}$. 
Here $(-1)_K=0$.
\item
If $\epsilon\neq0$ (so $p=2$) and $b(\epsilon,\epsilon)=0$, let $K$ be a field such that $-1$ is a sum of two squares but is not a square (for instance, an algebraic extension of $\mathbb{Q}$ with $G_K\isom\mathbb{Z}_2$ and $\sqrt{-1}\not\in K$, which exists e.g., by \cite{FriedJarden23}*{Th.\ 21.5.6}).
Here $(-1)_K\neq0$, but $(-1)_K\cup(-1)_K=0$.
\item
If $b(\epsilon,\epsilon)\neq0$ (so $\epsilon\neq0$ and $p=2$), let $K=\mathbb{Q}_2$.
Here $-1$ is not a sum of two squares, so $(-1)_K\cup(-1)_K\neq0$ (See \cite{Lam05}*{Ch.\ VI, Cor.\ 2.24(4)}).
\end{itemize} 
Note that in all cases, $K$ contains a root of unity of order $p$.

Next let $m=\max\{\dim_{\mathbb{F}_p}(W),3\}$.
Let $\Gamma=\mathbb{Z}^m$, considered as an ordered abelian group with respect to the lexicographic order induced by the natural order on $\mathbb{Z}$, and set $E=K((\Gamma))$.
Then $(-1)_E=0$ if $\epsilon=0$, and $(-1)_E\cup(-1)_E=0$ if $b(\epsilon,\epsilon)=0$.
By Lemma \ref{H2 large}, $\dim_{\mathbb{F}_p}(H^2(E))\geq m$.

We now define an $\mathbb{F}_p$-linear map $f_1\colon V=\Span(\epsilon)\to H^1(E)$ by $f_1(\epsilon)=(-1)_E$.
We further choose an $\mathbb{F}_p$-linear monomorphism $f_2\colon W\to H^2(E)$ satisfying \[
f_2(b(\epsilon,\epsilon))=(-1)_E\cup(-1)_E.
\]
Then $(f_1,f_2)$ is a monomorphism of augmented $\mathbb{F}_p$-bilinear maps, as desired.
\end{proof}

\section{A transfinite construction}
The following proposition will be the key step in the proof of the Main Theorem.  

\begin{prop}
\label{extending embeddings in codimension 1}
Let $(U,W,b,\epsilon)$ be an augmented $\mathbb{F}_p$-bilinear map.
Suppose that $\widehat U$ is an $\mathbb{F}_p$-linear space containing $U$ as a subspace of codimension $1$, and let $(\widehat U, W,\widehat b,\epsilon)$ be an augmented $\mathbb{F}_p$-bilinear map with $\widehat b|_{U\times U}=b$ (but with the same space $W$).
Let $K$ be a field containing a root of unity of order $p$, and let $(f_1,f_2)$ be a monomorphism as in the lower row of the diagram below. 

Then there exists a field extension $\widehat K/K$, with $\Res_{\widehat K/K}\colon H^1(K)\to H^1(\widehat K)$ injective,
and a monomorphism $(\widehat f_1,\widehat f_2)$ of augmented $\mathbb{F}_p$-bilinear maps, making the following diagram commutative: 
\[
\xymatrix{
\ (\widehat U,W,\widehat b,\epsilon)\ \ar@{-->}@{>->}[r]^{(\widehat f_1,\widehat f_2)\quad\qquad}&\ (H^1(\widehat K),H^2(\widehat K),\cup,(-1)_{\widehat K})\ \\
\ \strut (U,W,b,\epsilon)\ \ar@{^{(}->}[u]\ar@{>->}[r]^{(f_1,f_2)\quad\qquad}&\ \strut (H^1(K),H^2(K),\cup,(-1)_K)\ \ar[u]_{\Res_{\widehat K/K}}.
}
\]
\end{prop}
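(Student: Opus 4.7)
The plan is to adjoin a single fresh degree-$1$ cohomology class by passing to a formal power series field, and then to force the desired cup product identities by iterated generic splitting, while carefully preserving the injectivity of $f_2$ upon restriction.

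Pick $u_0\in\widehat U\setminus U$, so $\widehat U=U\oplus\mathbb{F}_pu_0$, and first pass to $E:=K((t))$. Lemma~\ref{injectivity of restriction in power field  extensions} says $\Res_{E/K}$ is a monomorphism of augmented bilinear maps, and the Wadsworth formula~(\ref{structure of cohomology in extensions}) with $\Gamma=\mathbb{Z}$ gives
\[
H^1(E)=\Res_{E/K}H^1(K)\oplus\mathbb{F}_p\cdot(t)_E,\qquad H^2(E)=\Res_{E/K}H^2(K)\oplus\bigl(H^1(K)\cup(t)_E\bigr).
\]
The aim is to extend $f_1$ by declaring $\widehat f_1(u_0):=(t)_E$ (restricted to whatever extension $\widehat K/E$ we build). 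For each $u\in U$ define the obstruction
\[
\Delta_u:=(t)_E\cup\Res f_1(u)\;-\;\Res f_2\bigl(\widehat b(u_0,u)\bigr)\;\in\;H^2(E),
\]
so that the required identity $\widehat f_1(u_0)\cup\widehat f_1(u)=\widehat f_2(\widehat b(u_0,u))$ will hold in $H^2(\widehat K)$ if and only if $\Res_{\widehat K/E}(\Delta_u)=0$.

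The decisive algebraic observation is that with respect to the above direct sum the component of $\Delta_u$ in $H^1(K)\cup(t)_E$ equals $f_1(u)\cup(t)_E$. Since $f_1$ is injective and $\chi\mapsto\chi\cup(t)_E$ is injective on $H^1(K)$, the map $u\mapsto\Delta_u$ is an $\mathbb{F}_p$-linear isomorphism from $U$ onto its span $D\subset H^2(E)$, and $D$ meets $B:=\Res_{E/K}f_2(W)$ trivially, because $B$ lies wholly in the other summand.

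Now fix a basis $\{u_\alpha\}_{\alpha<\lambda}$ of $U$ and build, by transfinite recursion inside $\Omega$, a tower $E=E_0\subseteq E_1\subseteq\cdots$ in which each $E_{\alpha+1}$ is a generic splitting field of $\Res_{E_\alpha/E}(\Delta_{u_\alpha})$ over $E_\alpha$ furnished by Proposition~\ref{generic splitting fields} (with $E_{\alpha+1}=E_\alpha$ if that class is already zero), and unions are taken at limit ordinals. Set $\widehat K:=E_\lambda$. Regularity of every successor step together with the decomposition of $H^1(E)$ makes $\Res_{\widehat K/K}$ injective on $H^1$, so $\widehat f_1\colon\widehat U\to H^1(\widehat K)$ extending $\Res f_1$ by $\widehat f_1(u_0):=\Res(t)_E$ is injective and sends $\epsilon$ to $(-1)_{\widehat K}$. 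A straightforward induction on $\alpha$, using that the kernel added at each successor step is precisely $\langle\Res\Delta_{u_\alpha}\rangle$ and that $\Delta_{u_\alpha}\notin\Span\{\Delta_{u_\beta}:\beta<\alpha\}$ (because $\{u_\alpha\}$ is a basis and $u\mapsto\Delta_u$ is injective), identifies
\[
\ker\!\bigl(\Res\colon H^2(E)\to H^2(\widehat K)\bigr)\;=\;D.
\]
Combined with $B\cap D=0$, this yields the injectivity of $\widehat f_2:=\Res f_2$; and by construction $\Res\Delta_u=0$ for every $u\in U$, so the required bilinear relation holds for all pairs $(u_0,u)$ with $u\in U$. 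The remaining diagonal case $(u_0,u_0)$ reduces to $u=\epsilon\in U$ via Lemma~\ref{basic properties of augmented cup products}(a), since $\widehat b(u_0,u_0)=\widehat b(u_0,\epsilon)$ and $(t)_E\cup(t)_E=(-1)_E\cup(t)_E$ in the $\kappa$-algebra $H^\bullet(E)$.

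The main obstacle is the balancing act in the transfinite construction: killing the whole obstruction space $D$ while leaving the image $B$ of $f_2$ intact. This is exactly what the Wadsworth decomposition of $H^2(E)$ buys us, since it cleanly separates $B$ from the mixed component into which every nonzero $\Delta_u$ projects non-trivially, guaranteeing $D\cap B=0$ and allowing each successor step of generic splitting to destroy one obstruction without ever touching $B$.
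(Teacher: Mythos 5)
Your construction is the same as the paper's: pass to $K((t))$ to create a fresh class $\delta=(t)_E\in H^1(E)$ independent of $f_1(U)$, run a transfinite tower of generic splitting fields that kills exactly the obstruction classes $\Delta_{u}=\delta\cup \Res f_1(u)-\Res f_2(\widehat b(u_0,u))$, and handle the diagonal term $\delta\cup\delta$ through $\epsilon$ via Lemma \ref{basic properties of augmented cup products}(a), just as the paper does. The one point where you genuinely diverge is the verification that restriction stays injective on $f_2(W)$: the paper carries an auxiliary inductive hypothesis (its Property (iv): if $\Res(\chi\cup\delta)$ lands in $\Res(\Img f_2)$ then $\chi\in f_1(V_\alpha)$) and uses it at each successor step to rule out a nonzero element of $\Img(f_2)$ dying, whereas you identify the kernel of $H^2(E)\to H^2(\widehat K)$ outright as $D=\{\Delta_u: u\in U\}$ by induction up the tower (each step contributes exactly $\langle\Delta_{u_\alpha}\rangle$ by Proposition \ref{generic splitting fields}), and then observe that $D$ meets $\Res_{E/K}(H^2(K))\supseteq B$ trivially because every nonzero $\Delta_u$ has nonzero component in the summand $\Res_{E/K}(H^1(K))\cup(t)_E$ of \eqref{structure of cohomology in extensions}. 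Both arguments work; yours is arguably cleaner in that the Wadsworth direct-sum decomposition does the separating work once and for all, while the paper's Property (iv) re-derives the same separation step by step. Two small points to tidy up: the relevant component of $\Delta_u$ is $-\Res f_1(u)\cup(t)_E$ rather than $+\Res f_1(u)\cup(t)_E$ (harmless for injectivity), and you should state explicitly that the identity $\widehat f_1(\hat u)\cup\widehat f_1(\hat u')=\widehat f_2(\widehat b(\hat u,\hat u'))$ for general $\hat u,\hat u'\in\widehat U$ follows from the cases you check together with bilinearity and the skew-symmetry of both pairings (Lemma \ref{basic properties of augmented cup products}(b)).
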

\begin{proof}
By (\ref{structure of cohomology in extensions}), one has
\[
H^1(K((t)))=\Res_{K((t))/K}(H^1(K))\oplus \langle\delta\rangle,
\]
where $\delta=(t)_{K((t))}$.
We may therefore replace $K$ by $K((t))$ to assume, using Lemma \ref{injectivity of restriction in power field  extensions}, that there exists $0\neq\delta\in H^1(K)$ such that 
\begin{equation}
\label{cohomology of the extension}
f_1(U)\cap\langle\delta\rangle=\{0\}.
\end{equation}

Next let $v\in \widehat U\setminus U$, so $\widehat U=U\oplus\Span(v)$.
We further choose a well ordered basis $\{u_{\alpha}\}_{1\leq \alpha<\mu}$ for $U$, where $\mu$ is an ordinal. 
We write 
\[
V_\alpha=\Span\bigl\{u_\beta\ \bigm|\ \beta\leq\alpha\bigr\}.
\]

We construct by transfinite induction a chain of fields $L_\alpha$, $\alpha<\mu$, with the following properties:
\begin{enumerate}
\item[(i)] 
For all $\beta\leq\alpha$, $L_\beta\subseteq L_\alpha$.
\item[(ii)] 
For all $\beta\leq\alpha$, the map $\Res_{L_\alpha/L_\beta}\colon H^1(L_\beta)\to H^1(L_\alpha)$ is injective.
\item[(iii)] 
For all $1\leq\alpha<\mu$ and $u\in V_\alpha$, 
\[
\Res_{L_\alpha/K}(f_1(u)\cup\delta)=\Res_{L_\alpha/K}(f_2(\widehat b(u,v))).
\]
\item[(iv)] 
For all $1\leq\alpha<\mu$ and $\chi\in H^1(K)$,  if 
\[
\Res_{L_\alpha/K}(\chi\cup\delta)\in\Res_{L_\alpha/K}(\Img(f_2)),
\]
then $\chi\in f_1(V_\alpha)$.
\item[(v)] 
For all $1\leq\alpha<\mu$, the restriction of $\Res_{L_\alpha/K}$ to $\Img(f_2)$ is injective.
\end{enumerate}

\bigskip

Case I: $\alpha=0$.
We take $L_0=K$, and there is nothing to prove.

\bigskip

Case II:\  $\alpha=\gamma+1$ for some $\gamma$. 
Let $L_\alpha$ be a generic splitting field of 
\[
\Res_{L_\gamma/K}(f_1(u_\alpha)\cup\delta-f_2(\widehat b(u_\alpha,v)))\in H^2(L_\gamma).
\] 

Property (i) is immediate.

For Property (ii) note that since a generic splitting field is a regular extension, $\Res_{L_\alpha/L_\gamma}\colon H^1(L_\gamma)\to H^1(L_\alpha)$ is injective.
Further, by induction, $\Res_{L_\gamma/L_\beta}\colon H^1(L_\beta)\to H^1(L_\gamma)$ is injective for all $\beta\leq\gamma$.
 
Property (iii) holds for $u=u_\alpha$ by the choice of $L_\alpha$.
Since, by induction, it is valid for every $u\in V_\gamma$, we deduce that it holds for all $u\in V_\alpha=V_\gamma\oplus\Span(u_\alpha)$.

To show Property (iv), we take $\chi\in H^1(K)$ and $\omega\in\Img(f_2)$ such that 
$\Res_{L_\alpha/K}(\chi\cup\delta)=\Res_{L_\alpha/K}(\omega)$.
By the properties of a generic splitting field (Proposition \ref{generic splitting fields}), there exists an integer $m$ such that 
\[
\Res_{L_\gamma/K}(\chi\cup\delta-\omega)
=m\Res_{L_\gamma/K}(f_1(u_\alpha)\cup\delta-f_2(\widehat b(u_{\alpha},v))).
\]
It follows that
\[
\Res_{L_\gamma/K}((\chi-mf_1(u_\alpha))\cup\delta)\in \Res_{L_\gamma/K}(\Img(f_2)).
\] 
By Property (iv) for $L_\gamma$, this implies that 
$\chi-mf_1(u_\alpha)\in f_1(V_\gamma)$, 
and therefore $\chi\in f_1(V_\alpha)$, as desired.

To prove Property (v), suppose that $\Res_{L_\alpha/K}(\omega)=0$ for some $0\neq\omega\in \Img(f_2)$.
Thus $\Res_{L_\gamma/K}(\omega)\in\Ker(\Res_{L_\alpha/L_\gamma})$.
The exact sequence in Proposition \ref{generic splitting fields} yields $0\leq m<p$ such that
\[
\Res_{L_\gamma/K}(\omega)
=m\Res_{L_\gamma/K}(f_1(u_\alpha)\cup\delta-f_2(\widehat b(u_\alpha,v))).
\]
By induction, $\Res_{L_\gamma/K}(\omega)\neq0$, so necessarily $m\neq 0$.
It follows that
\[
\Res_{L_\gamma/K}(f_1(u_\alpha)\cup\delta)\in \Res_{L_\gamma/K}(\Img(f_2)).
\]
By Property (iv) for $L_\gamma$, this implies that $f_1(u_\alpha)\in f_1(V_\gamma)$.
Since $f_1$ is injective, $u_\alpha\in V_\gamma$,  contrary to the linear independence of $u_\beta$, $\beta<\mu$.

\bigskip
	
Case III: $\alpha$ is a limit ordinal.
Let $L_\alpha=\bigcup_{\gamma<\alpha}L_\gamma$.
Then $G_{L_\alpha}=\varprojlim_{\gamma<\alpha} G_{L_\gamma}$, so $H^i(L_\alpha)=\varinjlim_{\gamma<\alpha}H^i(L_\gamma)$, $i=1,2$.
Properties (i)--(v) now follow by a standard limit argument.

\bigskip

To complete the proof, we take $\widehat K=\bigcup_{\alpha<\mu}L_\alpha$.
It follows from Property (ii) that 
$\Res_{\widehat K/K}\colon H^1(K)\to H^1(\widehat K)=\varinjlim_{\alpha<\mu}H^1(L_\alpha)$ is injective.

Take $\alpha$ such that $\epsilon\in V_\alpha$.
Property (iii) for $L_\alpha$ implies that 
\[
\Res_{\widehat K/K}(f_1(\epsilon)\cup\delta)=\Res_{\widehat K/K}(f_2(\widehat b(\epsilon,v))).
\]
Since $(-1)_K=f_1(\epsilon)$,  we obtain using Lemma \ref{basic properties of augmented cup products}(b) that
\begin{equation}
\label{sss}
\begin{split}
\Res_{\widehat K/K}(\delta\cup\delta)&=\Res_{\widehat K/K}((-1)_K\cup\delta)
=\Res_{\widehat K/K}(f_1(\epsilon)\cup\delta)\\
&=\Res_{\widehat K/K}(f_2(\widehat b(\epsilon,v)))
=\Res_{\widehat K/K}(f_2(\widehat b(v,v)).
\end{split}
\end{equation}

We now define $\mathbb{F}_p$-linear maps 
\[
\widehat f_1\colon\widehat U=U\oplus\Span(v)\to H^1(\widehat K), \quad \widehat f_2\colon W\to H^2(\widehat K)
\]
by
\[
\widehat f_1|_U=\Res_{\widehat K/K}\circ f_1, \quad \widehat f_1(v)=\Res_{\widehat K/K}(\delta), \quad \widehat f_2=\Res_{\widehat K/K}\circ f_2.
\]
We show that $(\widehat f_1,\widehat f_2)$ is a morphism of augmented $\mathbb{F}_p$-bilinear maps.
Indeed, 
\[
\widehat f_1(\epsilon)=\Res_{\widehat K/K}(f_1(\epsilon))=\Res_{\widehat K/K}((-1)_K)=(-1)_{\widehat K}.
\]
Furthermore, take $\hat u,\hat u'\in \widehat U$, and write $\hat u=u+mv$, $\hat u'=u'+m'v$ with $u,u'\in U$ and $m,m'\in\mathbb{Z}$.
Using Property (iii) and equation (\ref{sss}), we compute:
\[
\begin{split}
&\widehat f_1(\hat u)\cup\widehat f_1(\hat u')
=\Res_{\widehat K/K}\bigl((f_1(u)+m\delta)\cup(f_1(u')+m'\delta)\bigr) \\
&=\Res_{\widehat K/K}\bigl(f_1(u)\cup f_1(u')+m'f_1(u)\cup\delta-mf_1(u')\cup\delta+mm'\delta\cup\delta\bigr) \\
&=\Res_{\widehat K/K}\bigl(f_2(b(u,u'))+m'f_2(\widehat b(u,v))-mf_2(\widehat b(u',v))+mm'f_2(\widehat b(v,v))\bigr) \\
&=\Res_{\widehat K/K}\bigl(f_2(\widehat b(u,u')+m'\widehat b(u,v)-m\widehat b(u',v)+mm'\widehat b(v,v)\bigr) \\
&=\Res_{\widehat K/K}\bigl(f_2(\widehat b(\hat u,\hat u'))\bigr) \\
&=\widehat f_2(\widehat b(\hat u,\hat u')),
\end{split}
\]
as desired.

The commutativity of the diagram in the assertion of the proposition follows immediately from the definition of $\widehat f_1$ and $\widehat f_2$.

Finally, by Property (ii), $\Res_{\widehat K/K}$ is injective on $H^1(K)$.
Therefore the injectivity of $\widehat f_1$ follows from (\ref{cohomology of the extension}). 
Furthermore, by Property (v), $\Res_{\widehat K/K}$ is injective on $\Img(f_2)$, implying the injectivity of $\widehat f_2$. 
\end{proof}

\section{The main result}
Given augmented $\mathbb{F}_p$-bilinear pairs $(V,W,b,\epsilon)$ and $(V',W',b',\epsilon')$, we set 
$(V,W,b,\epsilon)\leq(V',W',b',\epsilon')$ if and only if there exists a monomorphism $(f_1,f_2)\colon (V,W,b,\epsilon)\to(V',W',b',\epsilon')$.
Thus our Main Theorem asserts that the subcategory {\bf Aug-Bilin-Fields} is cofinal in {\bf Aug-}$\mathbb{F}_p${\bf-Bilin} with respect to $\leq$ in the following (slightly stronger) sense:

\begin{thm}
\label{precise main theorem}
Let $(V,W,b,\epsilon)$ be an augmented $\mathbb{F}_p$-bilinear map. 
Then there exists a field $F$ containing a root of unity of order $p$, and a monomorphism 
\[
(f_1,f_2) \colon(V,W,b,\epsilon)\to (H^1(F),H^2(F),\cup,(-1)_F)
\]
of augmented $\mathbb{F}_p$-bilinear maps such that $f_2$ is an isomorphism.
\end{thm}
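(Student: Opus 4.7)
The plan is to assemble the theorem from the three pieces of machinery established earlier: Proposition \ref{the case V=epsilon} for the base case, Proposition \ref{extending embeddings in codimension 1} for extending the first component by one dimension, and Proposition \ref{killing all algebras} to force surjectivity of $f_2$ onto the target $H^2$. A crucial feature of Proposition \ref{extending embeddings in codimension 1} is that it grows the first component of the bilinear pair by one dimension while keeping the second component $W$ fixed, which is exactly what allows an iterative construction to produce a single $W$-indexed monomorphism at the end.

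First I choose a subspace $V'$ complementary to $\Span(\epsilon)$ in $V$ and a well-ordered basis $\{v_\alpha\}_{\alpha < \mu}$ of $V'$. Define a transfinite filtration $U_\beta \subseteq V$, $\beta \leq \mu$, by $U_0 = \Span(\epsilon)$, $U_{\alpha+1} = U_\alpha \oplus \Span(v_\alpha)$ at successors, and $U_\lambda = \bigcup_{\beta < \lambda} U_\beta$ at limits, so that each $U_{\alpha+1}/U_\alpha$ has dimension at most one, $\epsilon \in U_\beta$ for all $\beta$, and $U_\mu = V$. I then construct inductively a chain of fields $K_\beta$, each containing a root of unity of order $p$, together with monomorphisms
\[
(f_1^\beta, f_2^\beta) \colon \bigl(U_\beta, W, b|_{U_\beta \times U_\beta}, \epsilon\bigr) \to \bigl(H^1(K_\beta), H^2(K_\beta), \cup, (-1)_{K_\beta}\bigr),
\]
compatible with the restriction maps $\Res_{K_\beta / K_{\beta'}}$ for $\beta' \leq \beta$. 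The base case $\beta = 0$ is a direct application of Proposition \ref{the case V=epsilon} to $(U_0, W, b|_{U_0 \times U_0}, \epsilon)$, which crucially already supplies all of $W$ as target. The successor step is a direct invocation of Proposition \ref{extending embeddings in codimension 1}. At a limit ordinal $\lambda$, I take $K_\lambda = \bigcup_{\beta < \lambda} K_\beta$; since $H^i(K_\lambda) = \varinjlim_{\beta < \lambda} H^i(K_\beta)$ for $i = 1, 2$, the direct limit of the previously constructed maps gives the required monomorphism, injectivity being preserved under the filtered colimit.

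After reaching $\beta = \mu$, set $K' = K_\mu$ with colimit monomorphism $(f_1, f_2) \colon (V, W, b, \epsilon) \to (H^1(K'), H^2(K'), \cup, (-1)_{K'})$. To finish, I apply Proposition \ref{killing all algebras} with $K = K'$ and $A = \Img(f_2) \subseteq H^2(K')$, producing a field $F \supseteq K'$ such that $\Res_{F/K'}$ is injective on $H^1(K')$ and restricts to an isomorphism $A \xrightarrow{\sim} H^2(F)$. Setting $\tilde f_i = \Res_{F/K'} \circ f_i$ then yields the desired final monomorphism: $\tilde f_1$ is injective as a composition of two injections, while $\tilde f_2$ sends $W$ bijectively onto $A$ and $\Res_{F/K'}|_A$ sends $A$ isomorphically onto $H^2(F)$, so $\tilde f_2$ is an isomorphism onto the full $H^2(F)$ as required.

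The structural work having already been done in the preceding propositions, what remains is largely organizational. The most delicate point is the limit stage of the transfinite construction, where one must check that the restriction morphisms cohere across an unbounded chain and that the bilinear-diagram commutativity survives the filtered colimit; both follow from the compatibility of profinite cohomology with inverse limits of the underlying Galois groups for finite coefficients, but the bookkeeping across a well-ordered basis of arbitrary cardinality still needs care. A secondary subtlety is verifying that Proposition \ref{the case V=epsilon} can be applied to the pair $(U_0, W, b|_{U_0 \times U_0}, \epsilon)$ with arbitrary target $W$; this is already arranged in that proposition's proof by taking $\Gamma = \mathbb{Z}^m$ with $m$ large enough so that $H^2(K_0)$ accommodates an injection of $W$.
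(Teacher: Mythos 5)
Your proposal is correct and follows essentially the same route as the paper: both reduce the theorem to Proposition \ref{the case V=epsilon} for the base case $U=\Span(\epsilon)$, Proposition \ref{extending embeddings in codimension 1} for the codimension-one extension step, and Proposition \ref{killing all algebras} applied to $A=\Img(f_2)$ at the end. The only difference is organizational: the paper runs a Zorn's lemma argument on the poset of partial embeddings $(U,E,g)$ and derives a contradiction from a non-maximal $U\neq V$, whereas you carry out the equivalent explicit transfinite recursion along a well-ordered basis, with the same limit-stage colimit verification.
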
	
\begin{proof}
Let $(V,W,b,\epsilon)$ be an augmented $\mathbb{F}_p$-bilinear map.
Let $\mathcal{S}$ be the set of all triples $(U,E,g)$ consisting of:
\begin{enumerate}
\item
A subspace $U$ of $V$ containing $\epsilon$;
\item
A field $E$ containing a root of unity of order $p$;
\item
A monomorphism of augmented $\mathbb{F}_p$-bilinear maps
\[
g=(g_1,g_2)\colon(U,W,b|_{U\times U},\epsilon)\to (H^1(E),H^2(E),\cup,(-1)_E).
\]
\end{enumerate}
Proposition \ref{the case V=epsilon} shows that $\mathcal{S}$ is nonempty, since $U=\Span\{\epsilon\}$ yields a triple in $\mathcal{S}$.

We partially order $\mathcal{S}$ by setting $(U,E,g)\preceq (U',E',g')$ if and only if $U\leq U'$, $E$ is a subfield of $E'$ such that $\Res_{E'/E}\colon H^1(E)\to H^1(E')$ is injective, and there is a commutative diagram of morphisms of augmented $\mathbb{F}_p$-bilinear maps:
\[
\xymatrixcolsep{5pc}
\xymatrix{
\ ( U',W,b,\epsilon)\ \ar@{-->}@{>->}[r]^(0.45){g'=(g'_1, g'_2)\qquad}&\ (H^1(E'),H^2(E'),\cup,(-1)_{E'})\ \\
	\ \strut (U,W,b,\epsilon)\ \ar@{^{(}->}[u]\ar@{>->}[r]^{g=(g_1,g_2)\qquad}&\ \strut (H^1(E),H^2(E),\cup,(-1)_E)\ \ar[u]_{\Res_{E'/E}}.
}
\]

Every chain $\mathcal{C}=\{(U_i,E_i,g_i=(g_{i,1},g_{i,2}))\ |\ i\in I\}$ in $(\mathcal{S},\preceq)$ admits an upper bound $(U,E,g)$ in $\mathcal{S}$, where 
\[
U=\bigcup_{i\in I} U_i, \quad E=\bigcup_{i\in I}E_i, \quad g=(\varinjlim_{i\in I} g_{i,1},\varinjlim_{i\in I}g_{i,2}).
\] 
Zorn's lemma therefore yields a maximal element $(U,K,f=(f_1,f_2))$ in $(\mathcal{S},\preceq)$.
 
If $U\neq V$, then we choose an intermediate subspace $U\subset \widehat U\subseteq V$, where $U$ has codimension $1$ in $\widehat U$.
Proposition \ref{extending embeddings in codimension 1} yields a field extension $\widehat K/K$ such that $\Res_{\widehat K/K}\colon H^1(K)\to H^1(\widehat K)$ is injective, and a monomorphism $\widehat f=(\widehat f_1,\widehat f_2)$ of augmented $\mathbb{F}_p$-bilinear maps with a commutative diagram:
\[
\xymatrixcolsep{5pc}\xymatrix{
\ (\widehat U,W,b|_{\widehat U\times \widehat U},\epsilon)\ \ar@{-->}@{>->}[r]^(0.45){( \widehat f_1,\widehat f_2)\qquad}&\ (H^1(\widehat K),H^2(\widehat K),\cup,(-1)_{\widehat K})\ \\
	\ \strut (U,W,b|_{U\times U},\epsilon)\ \ar@{^{(}->}[u]\ar@{>->}[r]^{(f_1,f_2)\qquad}&\ \strut (H^1(K),H^2(K),\cup,(-1)_K)\ \ar[u]_{\Res_{\widehat K/K}}.
}
\]
This contradicts the maximality of $(U,K,f)$.
Consequently, $U=V$.

Finally, Proposition \ref{killing all algebras} yields a field extension $F$ of $K$ such that the map $\Res_{F/K}\colon H^1(K)\to H^1(F)$ is injective and $\Res_{F/K}\colon f_2(W)\to H^2(F)$ is an isomorphism. 
Then the composition  
\[
(\widehat f_1,\widehat f _2)=(\Res_{F/K}\circ f_1,\Res_{F/K}\circ f_2)\colon (V,W,b,\epsilon)\to (H^1(F),H^2(F),\cup,(-1)_F)
\]
is a monomorphism of $\mathbb{F}_p$-bilinear maps, and moreover, $\widehat f_2$ is an isomorphism. 
\end{proof}

We deduce an analogous result for non-augmented $\mathbb{F}_p$-bilinear maps, provided that they are skew-symmetric:

\begin{cor}
\label{main result for skew-symmetric bilinear maps}
Let $V,W$ be $\mathbb{F}_p$-linear spaces and let $b\colon V\times V\to W$ be a skew-symmetric $\mathbb{F}_p$-bilinear map.
Then there exists a field $F$ containing a root of unity of order $p$, an $\mathbb{F}_p$-linear monomorphism $f_1\colon V\to H^1(F)$ and an $\mathbb{F}_p$-linear isomorphism $f_2\colon W\to H^2(F)$, such that the following diagram commutes
\[
\xymatrix{
H^1(F)&*-<2pc>{\times}&H^1(F)\ar[r]^{\cup}&H^2(F)\\
\strut V\ar@{>->}[u]^{f_1}&*-<2pc>{\times}&\strut V\ar@{>->}[u]_{f_1}\ar[r]^{b}&W\ar[u]^{\wr}_{f_2}.
}
\]
\end{cor}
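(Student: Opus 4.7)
The plan is to reduce Corollary \ref{main result for skew-symmetric bilinear maps} to the Main Theorem (Theorem \ref{precise main theorem}) by augmenting the given skew-symmetric bilinear map. The key observation is that the Main Theorem is stated for \emph{augmented} bilinear maps, whereas the corollary drops the augmentation hypothesis but retains skew-symmetry; Lemma \ref{bilinear extends to augmented bilinear} provides precisely the bridge between these two settings.

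First I would apply Lemma \ref{bilinear extends to augmented bilinear} to the given skew-symmetric $\mathbb{F}_p$-bilinear map $b\colon V\times V\to W$ to produce an augmented $\mathbb{F}_p$-bilinear map $(\widehat V,W,\widehat b,\epsilon)$ such that $V$ is an $\mathbb{F}_p$-linear subspace of $\widehat V$ of codimension at most $1$ and $\widehat b$ extends $b$ (with the \emph{same} target space $W$). Second, I would invoke Theorem \ref{precise main theorem} for $(\widehat V,W,\widehat b,\epsilon)$ to obtain a field $F$ containing a root of unity of order $p$ and a monomorphism
\[
(\widehat f_1,\widehat f_2)\colon(\widehat V,W,\widehat b,\epsilon)\to(H^1(F),H^2(F),\cup,(-1)_F)
\]
such that $\widehat f_2$ is an isomorphism. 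Finally, I would set $f_1=\widehat f_1|_V$ and $f_2=\widehat f_2$.

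To conclude, I would verify the three required properties: $f_1$ is $\mathbb{F}_p$-linear and injective, since it is a restriction of the injective linear map $\widehat f_1$; $f_2$ is an isomorphism by construction; and for every $v,v'\in V$ one has
\[
f_1(v)\cup f_1(v')=\widehat f_1(v)\cup\widehat f_1(v')=\widehat f_2(\widehat b(v,v'))=f_2(b(v,v')),
\]
where the middle equality is the compatibility built into the morphism $(\widehat f_1,\widehat f_2)$ and the last equality uses $\widehat b|_{V\times V}=b$. This gives commutativity of the diagram in the statement.

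There is essentially no obstacle: the proof is a one-line reduction once Lemma \ref{bilinear extends to augmented bilinear} and Theorem \ref{precise main theorem} are available. The only mild subtlety is ensuring that the extension $\widehat b$ in Lemma \ref{bilinear extends to augmented bilinear} uses the \emph{same} codomain $W$ (so that the isomorphism $f_2\colon W\xrightarrow{\sim}H^2(F)$ produced by the Main Theorem transfers verbatim to the non-augmented setting); this is exactly how that lemma is formulated, so nothing further is required.
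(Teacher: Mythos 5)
Your proposal is correct and follows exactly the paper's own argument: extend $b$ to an augmented bilinear map via Lemma \ref{bilinear extends to augmented bilinear} (keeping the same codomain $W$), apply Theorem \ref{precise main theorem}, and restrict $\widehat f_1$ to $V$. The paper states this in two lines; your additional verification of injectivity, the isomorphism property of $f_2$, and the commutativity of the diagram is just the routine unwinding left implicit there.
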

\begin{proof}
Lemma \ref{bilinear extends to augmented bilinear} yields an augmented $\mathbb{F}_p$-bilinear map $(\widehat V,W,\widehat b,\epsilon)$ such that $V$ is an $\mathbb{F}_p$-subspace of $\widehat V$ and $\widehat b$ extends $b$.
Now apply Theorem \ref{precise main theorem} for this augmented bilinear map to obtain a field $F$ and $\mathbb{F}_p$-linear maps $f_1,f_2$ as required.
\end{proof}

\section{Graphs}
\label{section on graphs}
Our next aim is to show that, in general, the monomorphism in Theorem \ref{precise main theorem} cannot be required to be an isomorphism even for surjective augmented bilinear maps.
Examples showing this will be given in the next section.
They will be based on a general construction of augmented bilinear maps from graphs, which we now describe.

\begin{defin}
\rm
A \textsl{finite simplicial graph} (or \textsl{na\"ive graph}) $\Gamma=(V(\Gamma),E(\Gamma))$ is a finite unoriented graph $\Gamma$ with no loops and no double edges, where
$V(\Gamma)=\{v_1,\ldots,v_n\}$ is its set of vertices, and $E(\Gamma)$ is its set of edges. 
\end{defin}
We further say that a simplicial graph $\Gamma_0$ is an \textsl{induced subgraph} of the graph $\Gamma$, if $V(\Gamma_0)\subseteq V(\Gamma)$ and $E(\Gamma_0)$ consists exactly of the edges in $E(\Gamma)$ connecting vertices in $V(\Gamma_0)$.

\begin{exam}
\rm
Let $n\geq3$.
We write $L_{n-1}$, resp., $C_n$, for the simplicial graphs consisting of $n$ vertices forming a line, resp., a circle:
\[
\xymatrix@R=5pt{\\
\bullet\ar@{-}[r]&\bullet\ar@{-}[r]&\cdots\ar@{-}[r]&\bullet\ar@{-}[r]&\bullet  &
\bullet\ar@/^0.5cm/@{-}[rrrr]\ar@{-}[r]&\bullet\ar@{-}[r]&\cdots\ar@{-}[r]&\bullet\ar@{-}[r]&\bullet. \\
&&L_{n-1}&&&&&C_n
}
\]
We note that for $n\geq4$, any of the graphs $L_{n-1}$, $C_n$ contains one of $L_3$ or $C_4$ as an induced subgraph.
\end{exam}

We recall from e.g., \cite{CassellaQuadrelli21} or \cite{SnopceZalesskii22} the following group-theoretic construction:

\begin{defin}
\rm
Consider a finite simplicial graph $\Gamma=(V(\Gamma),E(\Gamma))$.
The \textsl{pro-$p$ right-angled Artin group} (abbreviated \textsl{RAAG}) associated with $\Gamma$ is the pro-$p$ group $G_\Gamma$ with $V(\Gamma)$ as a set of generators, subject to the requirement that vertices $v,v'$ commute whenever there is an edge connecting them.
Thus $G_\Gamma$ has the pro-$p$ presentation
\[
G_\Gamma=\Bigl\langle V(\Gamma)\ \Bigm|\ [v,v']=1 \hbox{ if and only if }\{v,v'\}\in E(\Gamma)\Bigr\rangle_{{\rm pro-}p}. 
\] 
\end{defin}

\begin{exam}
\rm
One has 
\[
\begin{split}
G_{L_3}&=\bigl\langle v_1,v_2,v_3,v_4\ \bigm|\  [v_1,v_2]=[v_2,v_3]=[v_3,v_4]=1\bigr\rangle_{{\rm pro-}p}, \\
G_{C_4}&=\bigl\langle v_1,v_2,v_3,v_4\ \bigm|\  [v_1,v_2]=[v_2,v_3]=[v_3,v_4]=[v_4,v_1]=1\bigr\rangle_{{\rm pro-}p}.
\end{split}
\]
Thus $G_{C_4}$ is the direct product of two copies of the free pro-$p$ groups on two generators, namely $\langle v_1,v_3\rangle_{{\rm pro-}p}$ and $\langle v_2,v_4\rangle_{{\rm pro-}p}$.
\end{exam}

Given a finite graph $\Gamma$ with vertices $v_1,\ldots,v_n$, we let $V=V_\Gamma$, $W=W_\Gamma$ be the $\mathbb{F}_p$-linear spaces on the bases $V(\Gamma)$, $E(\Gamma)$, respectively.
Denote the coordinate of $w\in W$ at $e\in E(\Gamma)$ by $w_e$.
We define the $\mathbb{F}_p$-bilinear map $b_\Gamma\colon V\times V\to W$ by setting for $1\leq k<l\leq n$ and $e\in E(\Gamma)$:
\[
b_\Gamma(v_k,v_l)_e=
\begin{cases}
1,&\hbox{if } (v_k,v_l)=e,\\
0,&\hbox{otherwise.}
\end{cases}
\]
Then $(V_\Gamma,W_\Gamma,b_\Gamma,0)$ is a surjective augmented $\mathbb{F}_p$-bilinear map.
We call it the \textsl{augmented bilinear map associated with $\Gamma$}.

\begin{prop}
\label{cup product for RAAGs}
For a simplicial graph $\Gamma$, there is an isomorphism of augmented $\mathbb{F}_p$-bilinear maps 
\[
(H^1(G_\Gamma),H^2(G_\Gamma),\cup,0)\isom(V_\Gamma,W_\Gamma,b_\Gamma,0).
\]
\end{prop}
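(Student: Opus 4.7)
The approach is to read off both $H^1(G_\Gamma)$ and $H^2(G_\Gamma)$ directly from the defining pro-$p$ presentation $G_\Gamma = F/R$, where $F$ is the free pro-$p$ group on $V(\Gamma)=\{v_1,\dots,v_n\}$ and $R$ is the closed normal subgroup generated by the commutators $r_e=[v_i,v_j]$, one for each edge $e=\{v_i,v_j\}\in E(\Gamma)$ with $i<j$, and then to identify the cup product via the standard formula for minimal pro-$p$ presentations.

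First I would verify minimality: the $r_e$ lie in $[F,F]$, and their images in $R/R^p[R,F]$ are $\mathbb{F}_p$-linearly independent, because distinct basic commutators $[v_i,v_j]$ are independent in the degree-$2$ part of the graded Lie algebra of $F$. Minimality gives, on the one hand, a canonical isomorphism $f_1\colon V_\Gamma\xrightarrow{\sim}H^1(G_\Gamma)$ sending $v_i$ to the dual character $\chi_i$ on $G_\Gamma/\Phi(G_\Gamma)$. On the other hand, since $H^2(F)=0$ for the free pro-$p$ group $F$, the five-term inflation-restriction sequence yields a transgression isomorphism $\trg\colon H^1(R)^{G_\Gamma}\xrightarrow{\sim}H^2(G_\Gamma)$. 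The characters $\eta_e\in H^1(R)^{G_\Gamma}$ dual to the basis $\{r_e\}_{e\in E(\Gamma)}$ of $R/R^p[R,F]$ then give a basis of $H^2(G_\Gamma)$ under transgression, so that $f_2(e):=\trg(\eta_e)$ defines an isomorphism $f_2\colon W_\Gamma\xrightarrow{\sim}H^2(G_\Gamma)$.

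Next I would compute the cup products $\chi_k\cup\chi_l$. The key tool is the standard formula (going back to Serre, recorded e.g.\ in Koch's book on Galois cohomology of pro-$p$ groups, and implicit in the references on pro-$p$ Galois groups cited elsewhere in the paper) relating cup products in $H^2$ of a minimally presented pro-$p$ group to commutator expansions of the relations: writing each $r\in R$ as
$r\equiv\prod_{i<j}[v_i,v_j]^{a_{ij}(r)}\prod_i v_i^{pb_i(r)}\pmod{F^p[F,[F,F]]}$,
one has, up to a universal sign, $\chi_k\cup\chi_l=\sum_e a_{kl}(r_e)\,\trg(\eta_e)$ for $k<l$. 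Substituting our relations $r_e=[v_i,v_j]$ yields $a_{kl}(r_e)=1$ precisely when $(k,l)=(i,j)$, and $0$ otherwise. Hence $\chi_k\cup\chi_l=f_2(b_\Gamma(v_k,v_l))$, after absorbing the universal sign into $f_2$ if necessary.

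Finally, I would observe that the augmented structures match: both distinguished elements equal $0$, and $b$ is alternate on each side. On the combinatorial side this is built into the definition of $b_\Gamma$; on the cohomological side, $\chi_i\cup\chi_i=0$ since the commutator $[v_i,v_i]$ is trivial and therefore appears with coefficient $0$ in every relation $r_e$, or alternatively by graded commutativity together with the vanishing of squares of degree-one classes in the cohomology ring of a RAAG. The main obstacle I anticipate is pinning down the precise cup-product formula and sign convention relating $\chi_k\cup\chi_l$ to the relation module $R/R^p[R,F]$; once this standard machinery is invoked correctly, the rest of the argument is bookkeeping with the explicit bases.
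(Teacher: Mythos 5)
Your proposal is correct and follows essentially the same route as the paper: the minimal pro-$p$ presentation of $G_\Gamma$ by commutator relations, the transgression isomorphism $H^1(R)^{G_\Gamma}\xrightarrow{\sim}H^2(G_\Gamma)$, and the standard formula (the paper cites \cite{NeukirchSchmidtWingberg}*{Prop.\ 3.9.13}) expressing $\langle \bar r,\chi_i\cup\chi_j\rangle$ as the exponent of $[x_i,x_j]$ in the expansion of $r$ modulo $S^{(3)}$. The only cosmetic differences are that you spell out the linear independence of the relators via basic commutators and track the sign explicitly, both of which the paper absorbs into its definition of the transgression pairing.
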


For the proof we first recall from \cite{Koch02}*{\S7.8} and \cite{NeukirchSchmidtWingberg}*{Ch.\ III, \S3} the duality between the cup product and the relation structure of a general finitely generated pro-$p$ group.

Let $S$ be free pro-$p$ group on finitely many generators $x_1,\ldots,x_n$.
We write $S^{(t)}$, $t=1,2,\ldots$, for the \textsl{lower $p$-central filtration} of $S$, defined inductively (in the pro-$p$ sense) by
\[
S^{(1)}=S, \quad S^{(t+1)}=(S^{(t)})^p[S,S^{(t)}].
\]
We identify $H^1(S)$ with the group of all profinite group homomorphisms $S\to\mathbb{F}_p$.
The substitution map gives a bilinear map $S/S^{(2)}\times H^1(S)\to\mathbb{F}_p$, which is \textsl{perfect}, in the sense that both its left and right kernels are trivial.
Let $\chi_1,\ldots,\chi_n$ be an $\mathbb{F}_p$-linear basis of $H^1(S)$ which is dual to the cosets of $x_1,\ldots x_n$ in $S/S^{(2)}$.

Let $R$ be a closed normal subgroup of $S$ which is contained in the pro-$p$ Frattini subgroup $S^{(2)}=S^p[S,S]$, and let $G=S/R$.
We thus have a minimal pro-$p$ presentation
\[
1\to R\to S\to G\to 1
\] 
of $G$.
The inflation map $\Inf\colon H^1(G)\to H^1(S)$ is an isomorphism, and as $S$ is free pro-$p$, $H^2(S)=0$ \cite{NeukirchSchmidtWingberg}*{Prop.\ 3.5.17}.
It therefore follows from the five term exact sequence in profinite cohomology \cite{NeukirchSchmidtWingberg}*{Prop.\ 1.6.7} that the transgression map $\trg\colon H^1(R)^G\to H^2(G)$ is an isomorphism.

There is a well-defined perfect $\mathbb{F}_p$-bilinear map \cite{EfratMinac11}*{Cor.\ 2.2}
\[
R/R^p[S,R]\times H^1(R)^G\to\mathbb{F}_p, \qquad (\bar r,\psi)\mapsto\psi(r).
\]
It gives rise to a perfect $\mathbb{F}_p$-bilinear map
\[
\langle\cdot,\cdot\rangle\colon R/R^p[S,R]\times H^2(G)\to\mathbb{F}_p, \qquad \langle\bar r,\omega\rangle\mapsto-(\trg^{-1}(\omega))(r).
\]
We call it the \textsl{transgression pairing}.

Let $r_1,\ldots,r_m$ be a minimal system of generators of $R$ as a closed normal subgroup of $S$.
Equivalently, the cosets of $r_1,\ldots,r_m$ form an $\mathbb{F}_p$-linear basis of $R/R^p[S,R]$ \cite{NeukirchSchmidtWingberg}*{Cor.\ 3.9.3}.
By duality, there is an isomorphism
\[
H^2(G)\xrightarrow{\sim}\mathbb{F}_p^m, \quad
\omega\mapsto \bigl(\langle \bar r_i,\omega\rangle\bigr)_{1\leq i\leq m}.
\]
 
Every element $r$ of $R$ has a unique presentation modulo  $S^{(3)}$ as
\[
r\equiv  \prod_{i=1}^nx_i^{pa_{ri}}\prod_{1\leq i<j\leq n}[x_i,x_j]^{a_{rij}}\pmod{S^{(3)}},    
\]
with $a_{ri},a_{rij}\in \{0,1,\ldots p-1\}$.
Viewing $\chi_1.\ldots,\chi_n$ also as elements of $H^1(G)$, one has for $r\in R$ and  $i<j$ that
\begin{equation}
\label{relations and cup products}
a_{rij}=\langle \bar r,\chi_i\cup\chi_j\rangle
\end{equation}
\cite{NeukirchSchmidtWingberg}*{Prop.\ 3.9.13}.

\begin{rem}
\rm
In a similar manner, the exponents $a_{ri}$ are related to the images of the $\chi_i$ under the Bockstein homomorphism $H^1(G)\to H^2(G)$ \cite{NeukirchSchmidtWingberg}*{Prop.\ 3.9.14}.
These connections can be extended to higher terms $S^{(t)}$ in the lower $p$-central filtration using tools from the combinatorics of words \cite{Efrat17}.
We refer to \cite{Efrat23} for the analogous theory in the case of the $p$-Zassenhaus filtration.
\end{rem}

\begin{proof}[Proof of Proposition \ref{cup product for RAAGs}]
We denote the vertices of $\Gamma=(V(\Gamma),E(\Gamma))$  by $v_1,\ldots,v_n$.
Let again $S$ be the free pro-$p$ group on the basis $x_1,\ldots,x_n$, take $G=G_\Gamma$, and let $R$ be the kernel
of the epimorphism $S\to G_\Gamma$, $x_i\mapsto v_i$.
Since $R$ is generated as a closed normal subgroup of $S$ by commutators, it is contained in $S^{(2)}$, and the previous discussion applies.

The commutators $r_e=[x_k,x_l]$ associated with the edges $e=(v_k,v_l)\in E(\Gamma)$, $k<l$, form a minimal system of generators of $R$ as a closed normal subgroup of $S$.
We may therefore identify $H^2(G_\Gamma)$ with $\mathbb{F}_p^{E(\Gamma)}$ as above, with $\omega_e\in\mathbb{F}_p$ denoting the $e$-th coordinate of $\omega\in H^2(G_\Gamma)$. 

For $e=(v_k,v_l)\in E(\Gamma)$, $k<l$, and for $1\leq i<j\leq n$ we have $a_{r_eij}=1$, if $k=i$ and $l=j$, and  $a_{r_eij}=0$ in all other cases.
In view of (\ref{relations and cup products}), the dual basis $\chi_1,\ldots,\chi_n$ of $H^1(G)$ satisfies
\[
(\chi_i\cup \chi_j)_e=
\begin{cases}
1,& \hbox{if }(v_i,v_j)=e,\\
0,&\hbox{otherwise.}
\end{cases}
\]
Hence the bilinear maps $\cup\colon H^1(G_\Gamma)\times H^1(G_\Gamma)\to H^2(G_\Gamma)$ and $b_\Gamma\colon V_\Gamma\times V_\Gamma\to W_\Gamma$ coincide on basis elements, giving the required isomorphism.
\end{proof}

\section{The common slot property}
\label{section on the common slot property}
In \cite{Quadrelli14}*{Th.\ 5.6} Quadrelli proves that the pro-$p$ right-angled Artin group $G_{C_4}$ is not realizable as the maximal pro-$p$ Galois group $G_F(p)$ of any field $F$ containing a root of unity of order $p$.
This result was extended by Snopce and Zalesskii, who used profinite Bass--Serre theory to prove the following remarkable fact:

\begin{thm}[Snopce--Zalesskii \cite{SnopceZalesskii22}]
\label{Snopce Zalesskii}
For every prime $p$ and for a finite simplicial graph $\Gamma$, the pro-$p$ group $G_\Gamma$ is realizable as the maximal pro-$p$ Galois group $G_F(p)$ of a field $F$ containing a root of unity of order $p$ if and only if $\Gamma$ does not contain either of the graphs $L_3$ or $C_4$ as an induced subgraph.
\end{thm}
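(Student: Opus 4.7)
The plan splits naturally into the two implications of the biconditional.

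For the necessity direction, I would first observe that if $\Gamma_0$ is an induced subgraph of $\Gamma$, then the inclusion $V(\Gamma_0)\hookrightarrow V(\Gamma)$ and the retraction sending each vertex in $V(\Gamma)\setminus V(\Gamma_0)$ to $1$ extend to pro-$p$ homomorphisms between $G_{\Gamma_0}$ and $G_\Gamma$, well defined precisely because ``induced'' means no extra commutation relations appear, and compose to the identity on $G_{\Gamma_0}$. Thus $G_{\Gamma_0}$ is a closed retract, hence a closed subgroup, of $G_\Gamma$, and it suffices to rule out $G_{L_3}$ and $G_{C_4}$ as maximal pro-$p$ Galois groups of fields containing $\mu_p$. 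Here I would invoke the Bloch--Kato property, itself a consequence of the Norm Residue Theorem applied to fixed fields: for every closed subgroup $H\le G_F(p)$, the graded ring $H^\bullet(H,\mathbb{F}_p)$ is quadratic. Following Quadrelli and Cassella--Quadrelli, the key explicit computation is to exhibit closed subgroups of $G_{L_3}$ and $G_{C_4}$ whose cohomology ring carries essential classes in degree $\ge 3$ that cannot be generated by the degree-$2$ relations, violating quadraticity.

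For the sufficiency direction, the crucial combinatorial input is the classical characterization: a finite simplicial graph has no induced $L_3$ and no induced $C_4$ if and only if it is a \emph{trivially perfect graph} (equivalently, a chordal cograph), and such graphs admit a recursive description: every such $\Gamma$ is either a single vertex, a disjoint union $\Gamma_1\sqcup\Gamma_2$ of two smaller such graphs, or the result of joining a single universal vertex to a smaller such graph. Correspondingly, $G_\Gamma$ is either $\mathbb{Z}_p$, a free pro-$p$ product $G_{\Gamma_1}\amalg G_{\Gamma_2}$, or a direct product $\mathbb{Z}_p\times G_{\Gamma'}$. I would then argue by induction on $|V(\Gamma)|$, using two closure properties of the class of maximal pro-$p$ Galois groups of fields containing $\mu_p$: first, closure under free pro-$p$ products, and second, if $G=G_F(p)$ with $\mu_{p^\infty}\subseteq F$, then $\mathbb{Z}_p\times G=G_{F((t))}(p)$, giving closure under direct product with $\mathbb{Z}_p$ within the subclass of fields containing all $p$-power roots of unity. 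One anchors the induction at a base field like $\overline{\mathbb{F}_\ell}$ for $\ell\neq p$.

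The main obstacle is the free pro-$p$ product step. Realizing an abstract free pro-$p$ product of two maximal pro-$p$ Galois groups as itself a maximal pro-$p$ Galois group---not merely as a quotient---is highly nontrivial, and this is exactly where the profinite Bass--Serre theory that is the innovation of Snopce--Zalesskii enters. The strategy is to present $G_\Gamma$ as the pro-$p$ fundamental group of a finite graph of pro-$p$ groups whose vertex groups are of the form $\mathbb{Z}_p^k$ (easily realized over iterated power series fields containing $\mu_{p^\infty}$) and whose edge groups are trivial, and then to invoke field-theoretic amalgamation results (in the tradition of Ershov, Koenigsmann, Haran, and Efrat) to glue the local realizations into a single field whose maximal pro-$p$ Galois group is the prescribed $G_\Gamma$. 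The Bloch--Kato quadraticity obstruction that ruled out $L_3$ and $C_4$ in the first direction is precisely what guarantees the existence of such a graph-of-groups decomposition in the remaining cases.
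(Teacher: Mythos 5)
The paper does not prove this statement: Theorem \ref{Snopce Zalesskii} is imported verbatim from Snopce--Zalesskii \cite{SnopceZalesskii22} and used as a black box, so there is no internal proof to compare yours against. Judged on its own terms, your outline is consistent with the strategy of the cited literature and with the paper's surrounding commentary (the Bloch--Kato obstruction for $L_3$ and $C_4$, and the recursion via free pro-$p$ products and direct products with $\mathbb{Z}_p$ mentioned after the theorem). The retract argument reducing necessity to the two forbidden graphs is correct, as is the combinatorial structure theorem for graphs with no induced $L_3$ or $C_4$.

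Nevertheless, as a proof the proposal has genuine gaps. In the necessity direction, the entire technical content is the verification that $G_{L_3}$ and $G_{C_4}$ are not Bloch--Kato; you only assert that suitable closed subgroups with non-quadratic cohomology exist. For $G_{C_4}$, which is a direct product of two free pro-$p$ groups of rank $2$, this is Quadrelli's theorem on direct product decompositions of Bloch--Kato pro-$p$ groups \cite{Quadrelli14}; for $G_{L_3}$ a separate argument is needed (\cite{CassellaQuadrelli21}, \cite{SnopceZalesskii22}). Without these computations nothing is ruled out. In the sufficiency direction, closure of the class $\{G_F(p)\ :\ \mu_p\subseteq F\}$ under free pro-$p$ products is a substantial standalone realization theorem, not a consequence of pro-$p$ Bass--Serre theory, which analyzes groups acting on pro-$p$ trees rather than constructing fields; you also need to preserve the condition $\mu_{p^\infty}\subseteq F$ through that step so that the power-series step yields a direct rather than semidirect product with $\mathbb{Z}_p$ (cf.\ Remark \ref{RAAG and -1}). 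Finally, two claims in your last paragraph are wrong: the presentation of $G_\Gamma$ as the pro-$p$ fundamental group of a graph of groups with trivial edge groups and vertex groups $\mathbb{Z}_p^k$ fails already for the $3$-vertex path $L_2$, since $G_{L_2}$ is the direct product of $\mathbb{Z}_p$ with a nonabelian free pro-$p$ group and is freely indecomposable; and the existence of the recursive decomposition in the remaining cases is guaranteed by the combinatorial characterization of trivially perfect graphs that you already quoted, not by the Bloch--Kato obstruction.
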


Several other equivalent Galois-theoretic conditions are given in \cite{SnopceZalesskii22}, as well as in the  earlier work \cite{CassellaQuadrelli21} by Cassella and Quadrelli.
For instance, this is also equivalent to the Bloch--Kato property for $G_\Gamma$, as well as to the universal Koszulity of $H^\bullet(G_\Gamma)$ (see \cite{CassellaQuadrelli21} for definitions).
Another equivalent condition is that $G_\Gamma$ is of \textsl{elementary type} in the following strong sense:
It can be constructed in finitely many steps from the group $\mathbb{Z}_p$ using free pro-$p$ products and direct products with $\mathbb{Z}_p$.  

See \cite{BlumerQuadrelliWeigel24} for a generalization of Theorem \ref{Snopce Zalesskii} to oriented graphs.

\begin{rem}
\label{RAAG and -1}
\rm
As pointed out in \cite{BlumerQuadrelliWeigel24}, when $p=2$ the field $F$ in Theorem \ref{Snopce Zalesskii} can be taken to contain $\sqrt{-1}$.
In fact, using the strong elementary type structure of $G_\Gamma$ one can even show that, for every $p$, there exists such a field $F$ containing all roots of unity of $p$-power order.
However this fact will not be used in the following discussion.
\end{rem}

We now relate Theorem \ref{Snopce Zalesskii} to the augmented bilinear map $(V_\Gamma,W_\Gamma,b_\Gamma,0)$ associated with the graph $\Gamma$.
Namely, we show that for $p=2$, the equivalent conditions of this theorem are also equivalent to $(V_\Gamma,W_\Gamma,b_\Gamma,0)$ being isomorphic to an augmented $\mathbb{F}_2$-bilinear map of field type.
This gives plenty of examples of augmented bilinear maps which are not of field type (up to isomorphism), as desired -- see Example \ref{counterexamples} below.

Our proof is based on the following notion:

\begin{defin}
\label{common slot property}
\rm
Let $V$ and $W$ be sets (at this point not assumed to be linear spaces) and consider a map $b\colon V\times V\to W$.
We say that $b$ has the  \textsl{common slot property} if for every $v,u,v',u'\in V$ with  $b(v,u)=b(v',u')$ there exists $u''\in V$ such that
$b(v,u)=b(v,u'')$ and $b(v',u')=b(v',u'')$.
\end{defin}

\begin{exam}
\label{common slot property for fields}
\rm
Let $p=2$ and let $F$ be a field of characteristic $\neq2$.
Then the cup product map $\cup\colon H^1(F)\times H^1(F)\to H^2(F)$ has the common slot property.
See \cite{Lam05}*{Ch.\ III, Th.\ 4.13} or  \cite{ElmanKarpenkoMerkurjev08}*{Lemma 98.15}, and recall the identification of the cup product $(a_1)_F\cup(a_2)_F$ in $H^2(F)$ with the quaternion algebra $(a_1,a_2/F)$ in the $2$-torsion subgroup ${}_2\mathrm{Br}(F)$ of the Brauer group.
\end{exam}

\begin{thm}
\label{equivalence}
Let $p=2$ and let $\Gamma$ be a finite simplicial graph.
The following conditions are equivalent:
\begin{enumerate}
\item[(a)]
There exists a field $F$ such that
\[
(V_\Gamma,W_\Gamma,b_\Gamma,0)\isom(H^1(F),H^2(F),\cup,(-1)_F)
\]
as augmented $\mathbb{F}_2$-bilinear pairs;
\item[(b)]
The bilinear map $b_\Gamma\colon V_\Gamma\times V_\Gamma\to W_\Gamma$ has the common slot property;
\item[(c)]
There exists a field $F$ such that $G_\Gamma\isom G_F(2)$ and $-1\in (F^\times)^2$;
\item[(d)]
There exists a field $F$ such that $(H^\bullet(G_\Gamma),0)\isom (H^\bullet(F),(-1)_F)$ as $\kappa$-algebras;
\item[(e)]
$\Gamma$ does not contain either of the graphs $L_3$ or $C_4$ as an induced subgraph. 
\end{enumerate}
\end{thm}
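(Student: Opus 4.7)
The plan is to establish a cycle $(\mathrm{e}) \Rightarrow (\mathrm{c}) \Rightarrow (\mathrm{d}) \Rightarrow (\mathrm{a}) \Rightarrow (\mathrm{b}) \Rightarrow (\mathrm{e})$. Four of these are short. For $(\mathrm{e}) \Rightarrow (\mathrm{c})$, combine the Snopce--Zalesskii theorem (Theorem \ref{Snopce Zalesskii}) with Remark \ref{RAAG and -1} to obtain a realizing field containing $\sqrt{-1}$. For $(\mathrm{c}) \Rightarrow (\mathrm{d})$, use that $-1 \in (F^\times)^2$ forces $(-1)_F = 0$, together with the inflation isomorphism $H^\bullet(G_F(2)) \cong H^\bullet(G_F)$ noted after Theorem \ref{equivalence of the Galois categories}. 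For $(\mathrm{d}) \Rightarrow (\mathrm{a})$, apply the functor $\mathbf{F}$ and invoke Proposition \ref{cup product for RAAGs}. For $(\mathrm{a}) \Rightarrow (\mathrm{b})$, quote Example \ref{common slot property for fields}.

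The substantive implication is $(\mathrm{b}) \Rightarrow (\mathrm{e})$, which I will prove by contrapositive. Assume $\Gamma$ contains an induced subgraph $\Gamma_0 \in \{L_3, C_4\}$. A short propagation argument will reduce the task to showing that $b_{\Gamma_0}$ itself fails CSP. Since $\Gamma_0$ is induced, one has direct-sum decompositions $V_\Gamma = V_{\Gamma_0} \oplus V'$ and $W_\Gamma = W_{\Gamma_0} \oplus W'$, where $V'$ is spanned by the vertices outside $V(\Gamma_0)$ and $W'$ by the edges outside $E(\Gamma_0)$. For $v \in V_{\Gamma_0}$, the map $b_\Gamma(v, \cdot)$ sends $V'$ into $W'$, because any edge joining $v$ to a vertex outside $V(\Gamma_0)$ lies in $E(\Gamma) \setminus E(\Gamma_0)$. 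Consequently, for pairs $(v, u), (v', u')$ drawn entirely from $V_{\Gamma_0}$, any candidate common slot $u'' = u_0 + u_1 \in V_\Gamma$ (with $u_0 \in V_{\Gamma_0}$, $u_1 \in V'$) must, after projecting the common-slot identities onto $W_{\Gamma_0}$, have $u_0$ itself serve as a common slot within $b_{\Gamma_0}$.

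It then suffices to exhibit explicit CSP failures in $b_{L_3}$ and $b_{C_4}$. For $L_3$ I take $(v, u) = (v_1 + v_4, v_2)$ and $(v', u') = (v_2 + v_3, v_1)$, both yielding $e_{12}$: the preimages $\{u'' : b_{L_3}(v_1 + v_4, u'') = e_{12}\} = \{v_2,\, v_1+v_2,\, v_2+v_4,\, v_1+v_2+v_4\}$ and $\{u'' : b_{L_3}(v_2 + v_3, u'') = e_{12}\} = \{v_1,\, v_1+v_2+v_3\}$ turn out to be disjoint. For $C_4$ the analogous pairs $(v_1 + v_2, v_3)$ and $(v_3 + v_4, v_2)$, both giving $e_{23}$, yield preimages $\{v_3,\, v_1+v_2+v_3\}$ and $\{v_2,\, v_2+v_3+v_4\}$ that are again disjoint. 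The main obstacle is guessing these counterexamples; once located, the disjointness of the preimages is a routine finite computation, and the propagation argument from the previous paragraph promotes these failures to all $\Gamma$ containing $L_3$ or $C_4$ as an induced subgraph.
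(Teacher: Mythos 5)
Your proposal is correct and follows essentially the same route as the paper: the same cycle of implications, with (e)$\Rightarrow$(c) via Snopce--Zalesskii plus Remark \ref{RAAG and -1}, (c)$\Rightarrow$(d)$\Rightarrow$(a) via the functor $\mathbf{F}$ and Proposition \ref{cup product for RAAGs}, (a)$\Rightarrow$(b) via Example \ref{common slot property for fields}, and the substance concentrated in (b)$\Rightarrow$(e). Two small differences are worth noting. First, for (b)$\Rightarrow$(e) the paper uses a single counterexample, $v=v_2+v_3$, $u=v_1+v_2$, $v'=v_1+v_2+v_3+v_4$, $u'=v_2$, both products being $e_1+e_2$ with $e_1=(v_1,v_2)$, $e_2=(v_2,v_3)$; since the verification only inspects the coordinates of $b_\Gamma(v,u'')$ and $b_\Gamma(v',u'')$ at $e_1,e_2,e_3$ for an arbitrary $u''\in V_\Gamma$, it covers $L_3$ and $C_4$ simultaneously and dispenses with your propagation step. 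Your version (two separate counterexamples plus the projection argument through $W_\Gamma=W_{\Gamma_0}\oplus W'$, which is valid precisely because $\Gamma_0$ is induced) also checks out; I verified the preimage computations. Second, the paper opens with a reduction to fields of characteristic $\neq2$: condition (a) does not a priori exclude ${\rm Char}\,F=2$, and Example \ref{common slot property for fields} is stated only for ${\rm Char}\,F\neq2$, so you should add the observation that in characteristic $2$ one has $H^2(F)=0$, forcing $\Gamma$ to have no edges, in which case all five conditions hold trivially. This is a one-line fix rather than a genuine gap.
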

\begin{proof}
For a field $F$ of characteristic $2$, the group $G_F(2)$ is free pro-$2$ and $H^i(F)=0$ for every $i\geq2$ \cite{NeukirchSchmidtWingberg}*{Th.\ 6.1.4 and Cor.\ 3.9.5}.
Further, one has $H^i(G_\Gamma)=0$ for every $i\geq2$ if and only if $G_\Gamma$ is a free pro-$p$ group. 
Therefore, when ${\rm Char}\, F=2$, the isomorphisms in (a), (c) and (d) mean that $\Gamma$ has no edges.
But then conditions (b) and (e) hold trivially.
Consequently, we may restrict ourselves in conditions (a), (c) and (d) to fields $F$ of characteristic $\neq2$.
Thus $-1$ is a root of unity of order $2$ in $F$.

\medskip 

(a)$\Rightarrow$(b): \quad
This is a consequence of Example \ref{common slot property for fields}.

\medskip

(c)$\Rightarrow$(d): \quad
Immediate.

\medskip

(d)$\Rightarrow$(a): \quad
We apply the functor ${\bf F}$ of \S\ref{section on kappa-algebras} and use Theorem \ref{cup product for RAAGs}.

\medskip

(c)$\Leftrightarrow$(e): \quad
This follows from Theorem \ref{Snopce Zalesskii} for $p=2$ and Remark \ref{RAAG and -1}.

\medskip

(b)$\Rightarrow$(e): \quad
Arguing by contradiction, we assume that the subgraph of $\Gamma$ induced by the vertices $v_1,v_2,v_3,v_4$ is either $L_3$ or $C_4$.
Thus $\Gamma$ contains the edges $e_1=(v_1,v_2)$, $e_2=(v_2,v_3)$, $e_3=(v_3,v_4)$, and no other edges among $v_1,v_2,v_3,v_4$, except for $e=(v_1,v_4)$ when the induced subgraph is $C_4$.
We may assume that $e_1,e_2,e_3$ correspond to the first three coordinates of $W_\Gamma$.

We claim that the common slot property fails for
\[
v=v_2+v_3,\  u=v_1+v_2, \ v'=v_1+v_2+v_3+v_4,\  u'=v_2.
\]
This will yield the desired contradiction.
Indeed, one has 
\[
b_\Gamma(v,u)=(1,1,0,0,\ldots, 0)=b_\Gamma(v',u').
\]
Now let $a_1,\ldots,a_n\in\mathbb{F}_2$.
The coordinates at $e_1,e_2,e_3$ of the vectors 
\[
b_\Gamma\bigl(v,\sum_{i=1}^na_iv_i\bigr), \text{\quad resp.,\quad} b_\Gamma\bigl(v',\sum_{i=1}^na_iv_i\bigr)
\]
are 
\[
a_1,a_2+a_3,a_4, \text{\quad  resp., \quad} a_1+a_2,a_2+a_3,a_3+a_4.
\] 
However, the system of equations
\[
\begin{cases}
a_1=a_1+a_2=1\\
\qquad\ a_2+a_3=1\\
a_4=a_3+a_4=0
\end{cases}
\]
obviously has no solutions.
Therefore the above two vectors cannot be simultaneously equal to $(1,1,0,0,\ldots, 0)$, proving our claim.
\end{proof}

Theorem \ref{equivalence} provides many examples showing that the restriction to monomorphisms in the Main Theorem is necessary, even for surjective augmented bilinear maps, as these cannot be replaced by isomorphisms:

\begin{exam}
\label{counterexamples}
\rm
Let $\Gamma$ be either $L_{n-1}$ or $C_n$ with $n\geq4$.
Then the augmented $\mathbb{F}_2$-bilinear map $(V_\Gamma,W_\Gamma,b_\Gamma,0)$ is not isomorphic to an augmented $\mathbb{F}_2$-bilinear map of field type.
\end{exam}

In view of these counterexamples, it would make sense to add the common slot property as an additional requirement to the definition of surjective augmented bilinear maps, in the case where the ground ring is $\mathbb{F}_2$.
This is closely related to the following notion due to Marshall (\cite{Marshall80}, \cite{Marshall04}*{\S2}):

\begin{defin}
\label{quaternionic maps}
\rm
A \textsl{quaternionic map} is a surjective map $q\colon G\times G\to Q$, where $G$ is a group of exponent $2$ with a distinguished element $-1$, and $Q$ is a set with a distinguished element $1$, satisfying the following conditions for all $a,b,c\in G$:
\begin{enumerate}
\item[(1)]
$q(a,-a)=1$;
\item[(2)]
$q(a,b)=q(a,c)$ implies that $q(a,bc)=1$;
\item[(3)]
$q(a,b)=q(b,a)$;
\item[(4)]
$q$ satisfies the common slot property.
\end{enumerate}
\end{defin}
To a field $F$ of characteristic $\neq2$ we associate a quaternionic map by taking $G=H^1(F)$, $-1=(-1)_F$, and by taking $Q$ to be the image of the cup product map $q=\cup\colon H^1(F)\times H^1(F)\to H^2(F)$, with $1\in Q$ being the zero element of $H^2(F)$.
It is a long-standing open question in quadratic form theory whether every quaternionic map can be realized as the quaternionic map of a field of characteristic $\neq2$ (see \cite{Marshall04}*{Open Problem 1 and Th.\ 2.2}, \cite{Lam05}*{Ch.\ XII, \S8}).
Note however that this possible axiomatization is restricted to the case $p=2$.

\begin{bibdiv}
\begin{biblist}
\bib{Amitsur55}{article}{
   author={Amitsur, S. A.},
   title={Generic splitting fields of central simple algebras},
   journal={Ann. of Math. (2)},
   volume={62},
   date={1955},
   pages={8--43},
}

\bib{Amitsur82}{article}{
   author={Amitsur, S. A.},
   title={Generic splitting fields},
   conference={
      title={Brauer groups in ring theory and algebraic geometry},
      address={Wilrijk},
      date={1981},
   },
   book={
      series={Lecture Notes in Math.},
      volume={917},
      publisher={Springer, Berlin-New York},
   },
   isbn={3-540-11216-2},
   date={1982},
   pages={1--24},
}

\bib{BarOnNikolov24}{article}{
   label={BN24},
   author={Bar-On, Tamar},
   author={Nikolov, Nikolay},
   title={Demushkin groups of uncountable rank},
   journal={J. Lond. Math. Soc. (2)},
   volume={109},
   date={2024},
   number={3},
   pages={Paper No. e12875, 24},
}

\bib{BassTate73}{article}{
   author={Bass, H.},
   author={Tate, J.},
   title={The Milnor ring of a global field},
   conference={
      title={Algebraic $K$-theory, II: ``Classical'' algebraic $K$-theory
      and connections with arithmetic},
      address={Proc. Conf., Battelle Memorial Inst., Seattle, Wash.},
      date={1972},
   },
   book={
      series={Lecture Notes in Math.},
      volume={Vol. 342},
      publisher={Springer, Berlin-New York},
   },
   date={1973},
   pages={349--446},
}


\bib{BlumerQuadrelliWeigel24}{article}{
   author={Blumer, Simone},
   author={Quadrelli, Claudio},
   author={Weigel, Thomas S.},
   title={Oriented right-angled Artin pro-$\ell$ groups and maximal pro-$\ell$ Galois groups},
   journal={Int. Math. Res. Not. IMRN},
   date={2024},
   number={8},
   pages={6790--6819},
}

\bib{CassellaQuadrelli21}{article}{
   author={Cassella, Alberto},
   author={Quadrelli, Claudio},
   title={Right-angled Artin groups and enhanced Koszul properties},
   journal={J. Group Theory},
   volume={24},
   date={2021},
   number={1},
   pages={17--38},
}

\bib{CheboluEfratMinac12}{article}{
   author={Chebolu, Sunil K.},
   author={Efrat, Ido},
   author={Min\'a\v c, J\'an},
   title={Quotients of absolute Galois groups which determine the entire
   Galois cohomology},
   journal={Math. Ann.},
   volume={352},
   date={2012},
   pages={205--221},
}

\bib{Efrat06}{book}{
   author={Efrat, Ido},
   title={Valuations, Orderings, and Milnor $K$-Theory},
   series={Mathematical Surveys and Monographs},
   volume={124},
   publisher={American Mathematical Society, Providence, RI},
   date={2006},
   pages={xiv+288},
}

\bib{Efrat17}{article}{
   author={Efrat, Ido},
   title={The cohomology of canonical quotients of free groups and Lyndon words},
   journal={Doc. Math.},
   volume={22},
   date={2017},
   pages={973--997},
}

\bib{Efrat23}{article}{
   author={Efrat, Ido},
   title={The $p$-Zassenhaus filtration of a free profinite group and shuffle relations},
   journal={J. Inst. Math. Jussieu},
   volume={22},
   date={2023},
   number={2},
   pages={961--983},
}

\bib{EfratMinac11}{article}{
label={EfMi11},
   author={Efrat, Ido},
   author={Min\'a\v c, J\'an},
   title={On the descending central sequence of absolute Galois groups},
   journal={Amer. J. Math.},
   volume={133},
   date={2011},
   pages={1503\ndash1532},
 }
 
\bib{ElmanKarpenkoMerkurjev08}{book}{
   author={Elman, Richard},
   author={Karpenko, Nikita},
   author={Merkurjev, Alexander},
   title={The Algebraic and Geometric Theory of Quadratic Forms},
   series={American Mathematical Society Colloquium Publications},
   volume={56},
   publisher={American Mathematical Society, Providence, RI},
   date={2008},
   pages={viii+435},
}

\bib{FriedJarden23}{book}{
   author={Fried, Michael D.},
   author={Jarden, Moshe},
   title={Field Arithmetic},
   edition={4},
   publisher={Springer, Cham},
   date={2023},
   pages={xxxi+827},
}

\bib{HaesemeyerWeibel19}{book}{
   author={Haesemeyer, Christian},
   author={Weibel, Charles A.},
   title={The Norm Residue Theorem in Motivic Cohomology},
   series={Annals of Mathematics Studies},
   volume={200},
   publisher={Princeton University Press, Princeton, NJ},
   date={2019},
   pages={xiii+299},
}

\bib{Koch02}{book}{
   author={Koch, Helmut},
   title={Galois Theory of $p$-Extensions},
   series={Springer Monographs in Mathematics},
   publisher={Springer-Verlag, Berlin},
   date={2002},
   pages={xiv+190},
}

\bib{Lam05}{book}{
   author={Lam, T. Y.},
   title={Introduction to Quadratic Forms over Fields},
   series={Graduate Studies in Mathematics},
   volume={67},
   publisher={American Mathematical Society, Providence, RI},
   date={2005},
   pages={xxii+550},
}

\bib{Marshall80}{book}{
   author={Marshall, Murray},
   title={Abstract Witt rings},
   series={Queen's Papers in Pure and Applied Mathematics},
   volume={57},
   publisher={Queen's University, Kingston, ON},
   date={1980},
   pages={vi+257},
}

\bib{Marshall04}{article}{
   author={Marshall, M.},
   title={The elementary type conjecture in quadratic form theory},
   conference={
      title={Algebraic and arithmetic theory of quadratic forms},
   },
   book={
      series={Contemp. Math.},
      volume={344},
      publisher={Amer. Math. Soc., Providence, RI},
   },
   isbn={0-8218-3441-X},
   date={2004},
   pages={275--293},
}

\bib{MinacPasiniQuadrelliTan21}{article}{
   author={Min\'a\v c, Jan},
   author={Pasini, Federico William},
   author={Quadrelli, Claudio},
   author={T\^an, Nguyen Duy},
   title={Koszul algebras and quadratic duals in Galois cohomology},
   journal={Adv. Math.},
   volume={380},
   date={2021},
   pages={Paper No. 107569, 49},
}

\bib{MinacPasiniQuadrelliTan22}{article}{
   author={Min\'a\v c, J.},
   author={Pasini, F. W.},
   author={Quadrelli, C.},
   author={T\^an, N. D.},
   title={Mild pro-$p$ groups and the Koszulity conjectures},
   journal={Expo. Math.},
   volume={40},
   date={2022},
   number={3},
   pages={432--455},
}
\bib{MinacWare91}{article}{
   author={Min\'a\v c, J\'an},
   author={Ware, Roger},
   title={Demu\v skin groups of rank $\aleph_0$ as absolute Galois groups},
   journal={Manuscripta Math.},
   volume={73},
   date={1991},
   number={4},
   pages={411--421},
}

\bib{MinacWare92}{article}{
   author={Min\'a\v c, J\'an},
   author={Ware, Roger},
   title={Pro-$2$-Demu\v skin groups of rank $\aleph_0$ as Galois groups of  maximal $2$-extensions of fields},
   journal={Math. Ann.},
   volume={292},
   date={1992},
   number={2},
   pages={337--353},
}

\bib{NeukirchSchmidtWingberg}{book}{
   author={Neukirch, J\"urgen},
   author={Schmidt, Alexander},
   author={Wingberg, Kay},
   title={Cohomology of Number Fields},
   series={Grundlehren der mathematischen Wissenschaften},
   volume={323},
   edition={2},
   publisher={Springer-Verlag, Berlin},
   date={2008},
   pages={xvi+825},
}

\bib{Positselski14}{article}{
   author={Positselski, Leonid},
   title={Galois cohomology of a number field is Koszul},
   journal={J. Number Theory},
   volume={145},
   date={2014},
   pages={126--152},
}

\bib{Quadrelli14}{article}{
   author={Quadrelli, Claudio},
   title={Bloch-Kato pro-$p$ groups and locally powerful groups},
   journal={Forum Math.},
   volume={26},
   date={2014},
   number={3},
   pages={793--814},
}

\bib{Roquette63}{article}{
   author={Roquette, Peter},
   title={On the Galois cohomology of the projective linear group and its applications to the construction of generic splitting fields of algebras},
   journal={Math. Ann.},
   volume={150},
   date={1963},
   pages={411--439},
}

\bib{SnopceZalesskii22}{article}{
   author={Snopce, Ilir},
   author={Zalesskii, Pavel},
   title={Right-angled Artin pro-$p$ groups},
   journal={Bull. Lond. Math. Soc.},
   volume={54},
   date={2022},
   number={5},
   pages={1904--1922},
}

\bib{Voevodsky03}{article}{
   author={Voevodsky, Vladimir},
   title={Motivic cohomology with $\mathbb{Z}/2$-coefficients},
   journal={Publ. Math. Inst. Hautes \'Etudes Sci.},
   number={98},
   date={2003},
   pages={59--104},
}

\bib{Voevodsky11}{article}{
   author={Voevodsky, Vladimir},
   title={On motivic cohomology with $\mathbb{Z}/l$-coefficients},
   journal={Ann. of Math. (2)},
   volume={174},
   date={2011},
   pages={401--438},
}

\bib{Wadsworth83}{article}{
   author={Wadsworth, Adrian R.},
   title={$p$-Henselian fields: $K$-theory, Galois cohomology, and graded
   Witt rings},
   journal={Pacific J. Math.},
   volume={105},
   date={1983},
   number={2},
   pages={473--496},
}

\end{biblist}
\end{bibdiv}

\end{document}